\def\[{\left[}
\def\]{\right]}
\def\({\left(}
\def\){\right)}
\newcommand{\RR}{\R}
\newcommand{\N}{\mathbb{N}}
\newcommand{\R}{\mathbb{R}}
\newtheorem{theorem}{Theorem}[section]
\newtheorem{assumption}[theorem]{Assumption}
\newtheorem{corollary}[theorem]{Corollary}
\newtheorem{definition}[theorem]{Definition}
\newtheorem{lemma}[theorem]{Lemma}
\newtheorem{proposition}[theorem]{Proposition}
\newtheorem{remark}[theorem]{Remark}
\newenvironment{proof}[1][Proof]{\noindent\textit{#1.} }{\hfill \rule{0.5em}{0.5em}}
\renewcommand{\eqref}[1]{(\ref{#1})}
\newcommand{\be}{\beta}
\renewcommand{\R}{\mathbb{R}}
\newcommand{\La}{\Lambda}
\newcommand{\la}{\lambda}
\newcommand{\ga}{\gamma}
\newcommand{\ep}{\epsilon}
\renewcommand{\RR}{{\rm I\kern -1.6pt{\rm R}}}
\numberwithin{equation}{section}
\numberwithin{figure}{section}
\begin{document}

\title{\textbf{Traveling wave solutions and spreading speeds for a scalar age-structured equation with nonlocal diffusion\thanks{Research was partially supported by the National Natural Science Foundation of China (No. 12301259 and No. 12371169).}}}

\author{{\sc Arnaud Ducrot$^{a}$ and Hao Kang$^{b}$}\\[2mm]
	{\small $^a$Normandie Univ, UNIHAVRE, LMAH, FR-CNRS-3335, ISCN, 76600 Le Havre, France}\\
	{\small Emails: arnaud.ducrot@univ-lehavre.fr}\\
	{\small $^b$Center for Applied Mathematics, Tianjin University, Tianjin, China} \\
	{\small Email: haokang@tju.edu.cn}
}
\date{}
\maketitle

{\centering Dedicated to Professor Shigui Ruan for His 60th Birthday.\par}

\begin{abstract}
	In this paper, we study the existence of traveling wave solutions and the spreading speed for the solutions of an age-structured epidemic model with nonlocal diffusion. Our proofs make use of the comparison principles both to construct suitable sub/super-solutions and to prove the regularity of traveling waves solutions.
	
	\vspace{0.3cm} \noindent {\em Key words:} Age structure; Nonlocal diffusion; Traveling wave solutions; Spreading speeds.
	
	\vskip 0.2cm
	
	\noindent {\em AMS subject classifications:}~  35K55, 35C07, 45G10, 92D30
	
\end{abstract}	

\section{Introduction and main results}

In this paper, we are concerned with the study of an age dependent epidemic model with nonlocal diffusion in space, motivated by \cite{ducrot2007travelling}, where the first author studied as similar problem with random diffusion in space. Our aim is to study a classical SI model from the point of view of the spatial spread of an epidemics. Here $a\in(0, a^+)$ denotes the physiological age and $a^+\in(0, \infty)$ is the maximum age of an individual, $t>0$ denotes the time and $x\in \R$ is the spatial position of an individual. The population can be split into two sub-populations, the susceptible and the infective. We denote by $S(t, a, x)$, respectively $I(t,a,x)$, the age distribution of the susceptible individuals, respectively the infective individuals, at time $t$ and spatial location $x\in \R$. With these notations, the model that we consider reads as follows

\begin{equation}\label{original}
	\begin{cases}
		\left(\frac{\partial I}{\partial t}+\frac{\partial I}{\partial a}\right)(t, a, x)=(J\ast I-I)(t, a, x)+S(t, a, x)\int_0^{a^+}K(a, a')I(t, a', x)da'-\mu(a)I(t, a, x),\\
		\left(\frac{\partial S}{\partial t}+\frac{\partial S}{\partial a}\right)(t, a, x)=d\left(J\ast S-S\right)(t, a, x)-S(t, a, x)\int_0^{a^+}K(a, a')I(t, a', x)da'-\mu(a)S(t, a, x),\\
		I(t, 0, x)=p\int_0^{a^+}\beta_2(a)I(t, a, x)da,\\
		S(t, 0, x)=\int_0^{a^+}\beta_1(a)S(t, a, x)da+(1-p)\int_0^{a^+}\beta_2(a)I(t, a, x)da,\\
		I(0, a, x)=I_0(a, x),\;S(0, a, x)=S_0(a, x),
	\end{cases}
\end{equation} 
wherein we have set
$$
\left(J\ast u-u\right)(x):=\int_\R J(x-y) u(y)dy-u(x), \quad \forall u\in C_b(\R),
$$
for some continuous convolution kernel $J:\R\to\R$, whose specific properties will be presented in Assumption \ref{Assump} below, while $C_b(\R)$ denotes the space of bounded and continuous functions. Here the function $K(a, a')$ denotes the rate of the disease transmission from infective individuals of age $a'$ to susceptible individuals of age $a$. In addition, the function $\mu$ denotes the age-specific death rate for both the infective and susceptible individuals, $\be_1$ and $\be_2$ denote the age-specific birth rate for the susceptible and infective ones, respectively. Moreover, we assume that these birth rates are identical, that is $\beta_1\equiv\beta_2=\beta$. The constant $p\in[0, 1]$ denotes the proportion of the vertical transmission, that is the proportion of the infective newborns inherited from their infective parents. Further, we assume that $S$ and $I$ have the same diffusion coefficient, that is $d=1$. This assumption combined with no additional death rate due to the disease allow us to reduce the system to a single scalar equation. We would like to mention that for the existence of traveling wave of age-structured $SI$ system with random diffusion have been obtained by Ducrot and Magal \cite{ducrot2009travelling,ducrot2011travelling} with/without external supply and Ducrot et al. \cite{ducrot2010travelling} in a multigroup framework, respectively. 

In this work, we consider that the total population has a demographic equilibrium, meaning that birth and death equilibrate the population. This prevents the population from going to extinction and from exploding as time increases. The total population stabilizes to a steady state as $t\to\infty$. Mathematically speaking, this assumption can be written as the following condition on the demographic functions $\beta$ and $\mu$:
\begin{equation}\label{demography}
\int_0^{a^+}\beta(a)\exp\left(-\int_0^a\mu(a')da'\right)da=1.
\end{equation}  

This condition may imply that multiple steady states exist for the total population $I+S$ when $d=1$ (see Webb \cite{webb1984theory} for some discussions on this condition). Next, we are looking for heteroclinic solutions of system \eqref{original} with the following behavior for $x\to\pm\infty$:
\begin{equation}
\begin{array}{ll}
I(t, a, -\infty)=\exp\left(-\int_0^a\mu(a')da'\right),\; I(t, a, +\infty)=0,\\
S(t, a, -\infty)=0, \; S(t, a, +\infty)=\exp\left(-\int_0^a\mu(a')da'\right)\label{SI-infty},
\end{array}
\end{equation}
These conditions mean that at $x=+\infty$, the population is only composed of the susceptible, whereas at $x=-\infty$ the population is only composed of infected individuals. \\
Now when $d=1$, adding-up the first two equations in \eqref{original}, the nonlinear terms cancel and one obtains using \eqref{SI-infty}, that
$$
(I+S)(t, a, x)=\exp\left(-\int_0^a\mu(a')da'\right):=\pi(a), \text{ provided }(I_0+S_0)(a, x)=\pi(a).
$$
As a consequence, system \eqref{original}-\eqref{SI-infty} reduces to the scalar equation for the unknown function $I$
\begin{equation}\label{I-equation}
	\begin{cases}
		\left(\frac{\partial I}{\partial t}+\frac{\partial I}{\partial a}\right)(t, a, x)=(J\ast I-I)(t, a, x)+\int_0^{a^+}K(a, a')I(t, a', x)da'\,(\pi(a)-I(t, a, x))-\mu(a)I(t, a, x),\\
		I(t, a, -\infty)=\pi(a),\; I(t, a, \infty)=0,\\
		I(t, 0, x)=p\int_0^{a^+}\beta(a)I(t, a, x)da.
	\end{cases}
\end{equation}
We rewrite this problem with the new unknown function $u(t, a, x)$ defined by
$$
u(t, a, x)=\frac{I(t, a, x)}{\pi(a)}
$$
and we obtain the following equation for the function $u$:
\begin{equation}\label{u-equation}
	\begin{cases}
		\left(\frac{\partial u}{\partial t}+\frac{\partial u}{\partial a}\right)(t, a, x)=(J\ast u-u)(t, a, x)+\int_0^{a^+}K(a, a')\pi(a')u(t, a', x)da'\,(1-u(t, a, x)),\\
		u(t, a, -\infty)=1,\; u(t, a, \infty)=0,\\
		u(t, 0, x)=p\int_0^{a^+}\beta(a)\pi(a)u(t, a, x)da,
	\end{cases}
\end{equation} 
Now we set 
$$
\ga(a):=\beta(a)\pi(a),
$$
and recalling  condition \eqref{demography}, we assume the following properties. 
\begin{assumption}\label{Assump}
{\rm We assume that the following properties hold true:
	\begin{itemize}
		\item [(i)] the function $\gamma$ is continuous and nonnegative in $[0, a^+]$ and satisfies $\int_0^{a^+}\gamma(a)da=1$; 
		
		\item [(ii)] the function $\pi$ is continuous, positive in $[0, a^+]$ and the function $\gamma\pi$ does not identically vanish on $(0, a^+)$;

        \item [(iii)] the kernel $J$ is continuous and nonnegative with $\int_\R J(x)dx=1, J(0)>0$ and $J(-x)=J(x), \forall x\in\R$. Moreover, $\int_\R J(x)e^{l x}dx<\infty$ for any $l>0$;
        
        \item [(iv)] the transmission rate $K(\cdot, \,\cdot)$ is continuous and positive in $[0, a^+]\times[0, a^+]$.

    \end{itemize}
    }
\end{assumption}

We now consider equation \eqref{u-equation} and we are concerned with traveling wave solutions, that is particular solutions of the form $U(a, \xi)=u(a, x-ct)$ with $\xi=x-ct$. Here $c$ is an unknown real number which should be found together with the unknown function $U$. Before proceeding, we set the rate of vertical transmission to be one, i.e. $p=1$. This setting is kind of technical mathematically, since $p=1$ is used to guarantee that (1) the limiting equation \eqref{equi2} has only two solutions $0$ and $1$, see Section \ref{LB}; (2) the hair trigger effect holds, see Lemma \ref{HTE}. 
Next, using the moving frame, that is the variable $\xi=x-ct$, the equation for the profile function $U$ becomes for $\xi\in \R$ and $a\in [0,a^+]$:
\begin{equation}\label{U-equation}
	\begin{cases}
		\frac{\partial U(a, \xi)}{\partial a}=\left(J\ast_\xi U-U\right)(a, \xi)+c\frac{\partial U(a, \xi)}{\partial \xi}+\int_0^{a^+}K(a, a')\pi(a')U(a', \xi)da'\,(1-U(a, \xi)),\\
		U(a, -\infty)=1,\; U(a, \infty)=0, \text{ uniformly in $[0, a^+]$},\\
		U(0, \xi)=\int_0^{a^+}\gamma(a)U(a, \xi)da.
	\end{cases}
\end{equation}
When the age specific demographic and epidemic functions are ignored, this equation reduces to a monostable equation with nonlocal diffusion, which has been well studied by many researchers, see for example Coville et al. \cite{coville2005propagation,coville2007non,coville2008existence,coville2008nonlocal}, Fang and Zhao \cite{fang2014traveling}, Li et al. \cite{li2010entire,sun2011traveling} and Shen et al. \cite{shen2010spreading,shen2015transition} and the references therein. In addition, we mention that the global dynamics of \eqref{I-equation} with spatially dependent coefficients on the bounded domain can be studied using the sign of the spectral bound of a linearized operator at zero, the interested readers can refer to Ducrot et al. \cite{Ducrot2022Age-structuredI,Ducrot2022Age-structuredII} and Kang and Ruan \cite{kang2021mathematical} for more details. 
 
Next, we solve \eqref{U-equation} formally along characteristic lines. Define the characteristic line $h(a, \xi)$ as the solution of the following equation,
\begin{equation}\label{characteristic}
\begin{cases}
\frac{\partial h(a, \xi)}{\partial a}=-c, &a\in[0, a^+],\;\xi\in\R,\\
h(0, \xi)=\xi,
\end{cases}
\end{equation}
that reads as $h(a, \xi)=\xi-ca$, which implies that $\partial_\xi h(a, \xi)=1$.
Next, solving \eqref{U-equation} along the characteristic line $h(a, \xi)$, one obtains that the function
\begin{equation}\label{w}
	w(a, \xi)=U(a, h(a, \xi))=U(a, \xi-ca)
\end{equation}
satisfies the following equation,
\begin{equation}\label{w-equation}
	\begin{cases}
		\frac{\partial w(a, \xi)}{\partial a}=\left(J\ast_\xi w-w\right)(a, \xi)+\int_0^{a^+}K(a, a')\pi(a')w(a', \xi)da'\,(1-w(a, \xi)),\\
		w(a, -\infty)=1,\; w(a, \infty)=0,\text{ uniformly in $[0, a^+]$},\\
		w(0, \xi)=\int_0^{a^+}\gamma(a)w(a, \xi+ca)da.
	\end{cases}
\end{equation}
The above reformulation using the characteristic lines allows us to take care of the term $\frac{\partial U}{\partial\xi}$ appearing in \eqref{U-equation}.  We now give the definition of a traveling wave solution of \eqref{u-equation} as follows.
\begin{definition}\label{TW}
	A function $w=w(a,\xi)$ is said to be a traveling wave solution of \eqref{u-equation} if $w\in C([0, a^+]\times\R)$ is a bounded solution of \eqref{w-equation} such that for any $a\in(0, a^+)$ the map $\xi\mapsto w(a, \xi)$ is globally Lipschitz continuous and for any $\xi\in\R$, $a\mapsto w(a, \xi)\in W^{1, 1}(0, a^+)$.
\end{definition}
The first aim of this paper is to prove the existence of traveling wave solutions for \eqref{u-equation}. More precisely, we will prove the following result.
\begin{theorem}\label{main1}
	Let Assumption \ref{Assump} be satisfied. Then there exists $c^*>0$ such that for any $c\ge c^*$, system \eqref{u-equation} has a traveling wave solution. Moreover, any traveling wave solution $w=w(a,\xi)$ is nonincreasing with respect to $\xi\in\R$. 
\end{theorem}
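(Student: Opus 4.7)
The plan is to follow the standard monostable strategy for nonlocal diffusion, adapted to the age-renewal condition appearing in \eqref{w-equation}. \emph{First}, I would linearize at $w\equiv 0$ and look for solutions of the form $w(a,\xi)=e^{-\lambda\xi}\phi_\lambda(a)$ with $\lambda>0$. Substituting into \eqref{w-equation} yields the spectral problem
\begin{equation*}
\phi_\lambda'(a)=\left(\int_\R J(y)e^{\lambda y}dy-1\right)\phi_\lambda(a)+\int_0^{a^+}K(a,a')\pi(a')\phi_\lambda(a')da',\qquad \phi_\lambda(0)=\int_0^{a^+}\gamma(a)e^{-\lambda c a}\phi_\lambda(a)da,
\end{equation*}
which, read as a Krein--Rutman type eigenvalue problem in $C([0,a^+])$, determines a dispersion curve $c=c(\lambda)$ along which a positive eigenfunction $\phi_\lambda$ exists. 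Using Assumption \ref{Assump} -- in particular the moment condition on $J$ and the normalization of $\gamma$ -- one checks that $c(\lambda)\to+\infty$ both as $\lambda\to 0^+$ and as $\lambda\to+\infty$, so that $c^*:=\min_{\lambda>0}c(\lambda)>0$ is attained at some $\lambda^*>0$.

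\emph{Second}, for each $c>c^*$ I would pick the smaller root $\lambda_1\in(0,\lambda^*)$ of $c=c(\lambda)$ and build the ordered pair
\begin{equation*}
\bar w(a,\xi):=\min\{1,\;e^{-\lambda_1\xi}\phi_{\lambda_1}(a)\},\qquad \underline w(a,\xi):=\max\{0,\;e^{-\lambda_1\xi}\phi_{\lambda_1}(a)-Me^{-(\lambda_1+\eta)\xi}\psi_\eta(a)\},
\end{equation*}
for a small $\eta>0$, a large constant $M>0$, and an auxiliary profile $\psi_\eta$ solving a perturbed linear problem whose boundary condition is compatible with the $ca$-shift in the renewal condition. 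Rewriting \eqref{w-equation} in its mild form along the $a$-characteristics and exploiting the comparison principle stated in the paper, I would set up a monotone iteration scheme starting from $\bar w$: the iterates decrease pointwise and converge to a fixed point $w$ with $\underline w\le w\le\bar w$. The uniform Lipschitz bound in $\xi$ inherited from $\bar w$ supplies the Arzel\`a--Ascoli compactness needed to produce a traveling wave profile on all of $\R$ in the sense of Definition \ref{TW}.

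\emph{Third}, the limit $w(a,\xi)\to 0$ as $\xi\to+\infty$ follows from the super-solution bound. For the other endpoint, the sub-solution only guarantees that $w$ stays bounded away from zero on left half-lines; to upgrade this to convergence to $1$, I would invoke the hair-trigger effect of Lemma \ref{HTE}, applied to \eqref{w-equation} with initial data pinched below by a positive constant, to conclude that $w(a,\xi)\to 1$ uniformly in $a\in[0,a^+]$. The hypothesis $p=1$ is essential here, as it guarantees that the limiting equation admits only the equilibria $0$ and $1$. Monotonicity of any traveling wave in $\xi$ is then obtained by a standard sliding argument based on the comparison principle and the boundary conditions at $\pm\infty$. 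Finally, the critical case $c=c^*$ is handled by passing to the limit along a sequence $c_k\downarrow c^*$, after normalizing the profiles $w_{c_k}$ so that $w_{c_k}(\cdot,0)$ remains in a fixed compact subset of $(0,1)$, thereby preventing the limit from collapsing to a constant.

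The main obstacle I anticipate is precisely the step upgrading ``bounded away from zero'' to ``converges to $1$'' at $\xi=-\infty$: it cannot be obtained from a direct maximum-principle argument and relies on the nontrivial dynamical input of Lemma \ref{HTE} combined with the age-renewal structure; a secondary technical hurdle is the construction of the auxiliary function $\psi_\eta$ in the sub-solution, whose boundary condition is nonlocal in $\xi$ because of the factor $e^{-\lambda c a}$ in the renewal term.
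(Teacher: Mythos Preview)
Your overall architecture (linearize at $0$, build an ordered sub/super-solution pair, run a monotone iteration, pass to the limit for $c=c^*$) matches the paper's. But the step you yourself flag as the main obstacle --- upgrading ``bounded away from zero'' to ``converges to $1$'' at $\xi\to-\infty$ --- is handled very differently in the paper, and the tool you propose does not apply.

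Lemma~\ref{HTE} is a statement about the Cauchy problem \eqref{IVP-u}: it says that solutions of the \emph{time-dependent} equation become close to $1$ on compact sets as $t\to\infty$. The wave profile equation \eqref{w-equation} is stationary; there is no time variable along which to flow, so the hair-trigger effect has no direct meaning here. What the paper actually does is much simpler. Because the super-solution $\overline{w}$ is nonincreasing in $\xi$ and the iteration starting from $\overline{w}$ preserves this monotonicity, the constructed profile $w(a,\cdot)$ is itself nonincreasing, so the pointwise limit $w^-(a):=\lim_{\xi\to-\infty}w(a,\xi)$ exists for every $a$. By dominated convergence the nonlocal term $J\ast w-w$ vanishes in the limit, and $w^-$ solves the age-only problem \eqref{equi2}. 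Lemma~\ref{LE-lim} then says that the only solutions of \eqref{equi2} with values in $[0,1]$ are the constants $0$ and $1$; since the sub-solution forces $\inf_a w^-(a)>0$, one concludes $w^-\equiv 1$. Thus the key input at $\xi=-\infty$ is Lemma~\ref{LE-lim}, not Lemma~\ref{HTE}, and the $\xi$-monotonicity is not a cosmetic conclusion obtained by sliding at the end but the mechanism that makes the left-endpoint argument work in the first place.

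Two smaller points. First, the paper's sub-solution uses the \emph{same} eigenfunction $\phi$ as the super-solution, namely $\underline{U}(a,\xi)=(e^{-\lambda\xi}-ke^{-(\lambda+\eta)\xi})\phi(a)$; no auxiliary profile $\psi_\eta$ with its own renewal condition is needed, which removes the secondary technical hurdle you anticipate. Second, the Lipschitz bound in $\xi$ is not ``inherited from $\bar w$'' --- being sandwiched between two Lipschitz functions does not make a function Lipschitz. The paper proves it separately (Lemma~\ref{Lip}) by comparing each iterate with the shifted-and-rescaled function $v_{n,h}=e^{mh}u^n(\cdot,\cdot+h)$; this estimate is what supplies the equicontinuity for the Arzel\`a--Ascoli step in the $c\downarrow c^*$ limit, while the monotone iteration itself converges by pointwise monotonicity and needs no compactness.
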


This theorem provides an existence result for equation \eqref{I-equation}. Indeed we have the following theorem
\begin{theorem}\label{main2}
	Let Assumption \ref{Assump} be satisfied and assume furthermore that $\mu: [0, a^+)\to\R_+$ and $\beta:[0, a^+)\to\R_+$ are continuous functions satisfying
	$$
	\int_0^{a^+}\mu(a)da=\infty \;\big(\Leftrightarrow \pi(a^+)=0\big), \quad \int_0^{a^+}\beta(a)\exp\left(-\int_0^a\mu(a')da'\right)da=1.
	$$
	Then there exists $c^*>0$ such that equation \eqref{I-equation} has a traveling wave solution for any wave speed $c\ge c^*$ and there is no traveling wave solution of equation \eqref{I-equation} if $c<c^*$. Moreover, the obtained traveling wave for $c\ge c^*$ vanishes at the maximum age, that is at $a=a^+$. 
\end{theorem}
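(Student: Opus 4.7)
The plan is to reduce Theorem \ref{main2} to Theorem \ref{main1} through the change of unknown $I(t,a,x) = \pi(a)\,u(t,a,x)$ that was used in the derivation of \eqref{u-equation} from \eqref{I-equation}. For the existence half, I would start from a traveling wave $w$ of \eqref{w-equation} provided by Theorem \ref{main1} at a speed $c \geq c^*$, set $U(a,\xi) = w(a,\xi + ca)$ to recover a bounded solution of \eqref{U-equation}, and then define the candidate wave $I(a,\xi) = \pi(a)\,U(a,\xi)$. A direct substitution into \eqref{I-equation} confirms this is indeed a traveling wave: the aging, nonlocal diffusion, and nonlinear transmission terms all transform correctly after multiplication by $\pi$ (using that $\pi$ is independent of $\xi$), while the renewal boundary condition at $a=0$ follows from $\gamma(a)=\beta(a)\pi(a)$, the normalization $\pi(0)=1$, and the choice $p=1$.

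The claim that the wave vanishes at the maximum age is then immediate. Theorem \ref{main1} yields a profile with $0 \leq U \leq 1$, so the additional assumption $\int_0^{a^+}\mu(a)\,\d a = \infty$, equivalent to $\pi(a^+)=0$, gives
\begin{equation*}
  I(a^+,\xi) = \pi(a^+)\,U(a^+,\xi) = 0, \qquad \xi \in \R.
\end{equation*}

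For the nonexistence statement I would argue by contradiction. Suppose $I$ is a traveling wave of \eqref{I-equation} with some speed $c$. Since $I$ is bounded and a comparison argument forces $0 \leq I \leq \pi$ on $[0,a^+)\times\R$, the quotient $U := I/\pi$ is well-defined and bounded on $[0,a^+)\times\R$ and satisfies \eqref{U-equation} there; once one shows $U$ extends continuously to $a = a^+$, it becomes an admissible traveling wave of \eqref{u-equation} in the sense of Definition \ref{TW}. Then it suffices to invoke the minimal-speed characterization that underlies the construction of $c^*$ in Theorem \ref{main1}: by monotonicity of any wave in $\xi$, one extracts an exponential decay rate $\lambda>0$ of $U(a,\cdot)$ at $+\infty$ and shows that $(\lambda,c)$ must satisfy the characteristic equation obtained by linearizing \eqref{w-equation} at $w\equiv 0$, namely
\begin{equation*}
  \phi'(a) = \Bigl(\int_\R J(y) e^{\lambda y}\,\dy - 1\Bigr)\phi(a) + \int_0^{a^+} K(a,a')\,\pi(a')\, e^{\lambda c (a-a')}\phi(a')\,\d a',
\end{equation*}
together with the renewal condition at $a=0$; this eigenvalue problem admits a positive solution $\phi$ only when $c\geq c^*$.

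The main obstacle is this nonexistence part, and within it two technical points: first, justifying that $U = I/\pi$ extends smoothly across $a = a^+$ even though $\pi(a^+)=0$, which requires using the Lipschitz/$W^{1,1}$ regularity imposed by Definition \ref{TW} together with the fact that any traveling wave of \eqref{I-equation} inherits this regularity; second, extracting the correct exponential decay rate at $+\infty$, for which the hypothesis $p=1$ is essential (as noted after \eqref{U-equation}) because it ensures that $0$ and $1$ are the only constant solutions of the limiting equation, allowing a clean linearization at $0$ and the derivation of the characteristic equation that defines $c^*$.
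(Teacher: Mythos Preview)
Your argument for the existence part and for the vanishing at $a=a^+$ is exactly the paper's: one passes from the wave $w$ of Theorem~\ref{main1} to $U(a,\xi)=w(a,\xi+ca)$ and then to $I(a,\xi)=\pi(a)U(a,\xi)$, and the boundedness $0\le U\le 1$ together with $\pi(a^+)=0$ gives the vanishing.

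For the nonexistence part, however, your route is genuinely different from the paper's. The paper does \emph{not} analyze the exponential decay of a hypothetical wave nor invoke the linearized characteristic equation. Instead it deduces nonexistence directly from the spreading result, Theorem~\ref{SS}: if a wave $U$ with speed $c<c^*$ existed, pick $c'\in(c,c^*)$ and a nontrivial compactly supported initial datum $u_0\le \min\{1,U(\cdot,\cdot-\eta)\}$ for some shift $\eta$. By comparison the solution $u$ stays below the translated wave, hence $u(t,a,x)\le U(a,x-ct-\eta)\to 0$ as $t\to\infty$ along $|x|=c't$; but Theorem~\ref{SS}(ii) forces $u(t,a,x)\to 1$ uniformly on $|x|\le c't$, a contradiction. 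This is what Remark~1.7 in the paper records.

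Your proposed approach is classical in spirit, but the two obstacles you yourself single out are real and not addressed anywhere in the paper. Showing that $U=I/\pi$ extends continuously (with the required $W^{1,1}$ regularity in $a$) across $a=a^+$ when $\pi(a^+)=0$ is delicate and would need an independent argument; and extracting a precise exponential decay rate at $+\infty$ from an \emph{arbitrary} wave (rather than the specific one built from the sub/super-solutions) typically requires additional tools such as a Harnack-type estimate or an Ikehara/Tauberian argument, none of which are developed here. The spreading-speed route bypasses both issues entirely, so if you want a complete proof within the framework of this paper you should use Theorem~\ref{SS} instead.
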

The proofs of these results are based on the comparison principle to construct suitable super- and sub-solutions for equation \eqref{w-equation}, motivated by Ducrot \cite{ducrot2007travelling}. An important difference between this work and \cite{ducrot2007travelling} (dealing with with random diffusion) is that the solution map for \eqref{w-equation} has no compactness property, thus the Schauder fixed point theorem may not directly be applied. However, \eqref{w-equation} preserves the monotonicity and thus a monotone iteration method still works and allows us to overcome the lack of smoothing effect for nonlocal diffusion equation. In addition, we establish the required regularity of traveling wave solutions with respect to the variable $\xi$ in Definition \ref{TW} using suitable comparison arguments. Thus this regularity will make the term $\frac{\partial U}{\partial\xi}$, appearing in \eqref{U-equation}, to be well-defined at least almost everywhere.

Our second aim in this work is to study the spreading speeds of the solutions of the Cauchy problem, that is of the following initial value problem associated to \eqref{u-equation}
\begin{equation}\label{IVP-u}
	\begin{cases}
		\left(\frac{\partial u}{\partial t}+\frac{\partial u}{\partial a}\right)(t, a, x)=\left(J\ast u-u\right)(t, a, x)+\int_0^{a^+}K(a, a')\pi(a')u(t, a', x)da'\,(1-u(t, a, x)),\\
		u(t, 0, x)=\int_0^{a^+}\gamma(a)u(t, a, x)da,\\
		u(0, a, x)=u_0(a, x).
	\end{cases}
\end{equation} 
Then our result reads as follows.
\begin{theorem}\label{SS}
	Let Assumption \ref{Assump} be satisfied and assume that the initial data $u_0\in C([0, a^+]\times\R)$ with $u_0\le1$ is compactly supported, that is, ${\rm supp}(u_0)\subset[0, a^+]\times[-R, R]$ for some $R>0$. Recalling that $c^*$ is defined in Theorem \ref{main2}, the solution $u=u(t, a, x)$ of problem \eqref{IVP-u} enjoys the following properties
	\begin{itemize}
		\item [(i)] For all $c>c^*$, it holds that
		$$
		\lim\limits_{t\to\infty}\sup_{|x|\ge ct, 0<a<a^+}u(t, a, x)=0.
		$$
		
		\item [(ii)] For all $0\le c<c^*$, it holds that
		$$ 
		\lim\limits_{t\to\infty}\inf_{|x|\le ct, 0<a<a^+}u(t, a, x)=1.
		$$
	\end{itemize}
\end{theorem}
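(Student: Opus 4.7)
The plan is to rely systematically on the comparison principle for \eqref{IVP-u}, sandwiching $u$ between explicit super- and sub-solutions built from the linearization of the equation at $u\equiv 0$, together with the hair-trigger effect Lemma \ref{HTE}. The threshold speed $c^\ast$ will be characterized through a dispersion relation coming from an age-dependent principal eigenvalue problem, and I expect this $c^\ast$ to coincide with the one appearing in Theorems \ref{main1}--\ref{main2}.

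For part~(i), I first linearize \eqref{IVP-u} at $u=0$ and search for solutions of the form $v(t,a,x)=V(a)\,e^{-\lambda(x-ct)}$. Setting $\widehat J(\lambda)=\int_\R J(y)e^{\lambda y}\,dy$, which is finite for all $\lambda\ge 0$ by Assumption \ref{Assump}(iii), this leads to the age eigenvalue problem
$$
V'(a)=\bigl[c\lambda+\widehat J(\lambda)-1\bigr]V(a)+\int_0^{a^+}K(a,a')\pi(a')V(a')\,da',\qquad V(0)=\int_0^{a^+}\gamma(a)V(a)\,da,
$$
whose solvability with a positive eigenfunction $V_{c,\lambda}$ defines a dispersion curve $c=c(\lambda)$, and $c^\ast=\inf_{\lambda>0}c(\lambda)$. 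For $c>c^\ast$ given, pick $c'\in(c^\ast,c)$ and $\lambda>0$ with associated positive $V_{c',\lambda}$. Because the nonlinearity sits below its linearization (KPP structure) and $u\ge 0$, the function
$$
\bar u(t,a,x)=M\,V_{c',\lambda}(a)\,\bigl[e^{-\lambda(x-c't)}+e^{\lambda(x+c't)}\bigr]
$$
is a super-solution of \eqref{IVP-u}; since $u_0$ is compactly supported, $M$ can be chosen large so that $\bar u(0,\cdot,\cdot)\ge u_0$. The comparison principle gives $u\le \bar u$, and on $\{|x|\ge ct\}$ one has $|x|-c't\ge (c-c')t\to\infty$, so $\bar u\to 0$ uniformly there, proving~(i).

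For part~(ii), I would combine the hair-trigger effect with a compactly supported sub-solution moving at a speed arbitrarily close to $c^\ast$ from below. Fix $0\le c<c^\ast$ and pick $c_1\in(c,c^\ast)$. In the moving frame $\xi=x-c_1 t$, the linearized operator on a bounded window $(-L,L)$, with the convolution kernel $J$ truncated to zero outside $(-L,L)$, admits for $L$ large enough a positive principal eigenvalue with positive eigenfunction $\Phi_L(a,\xi)$; the strict inequality $c_1<c^\ast$ is exactly what forces this eigenvalue to be positive for some $L$. Then for $\varepsilon>0$ small, $\underline u(t,a,x)=\varepsilon\,\Phi_L(a,x-c_1 t)$, extended by zero outside the window, is a sub-solution of \eqref{IVP-u}. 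The hair-trigger effect first produces a time $t_0$ at which $u(t_0,\cdot,\cdot)$ dominates such a bump; comparison then propagates it rightward at speed $c_1$ and, by symmetry of $J$, also leftward. Iterating the hair-trigger effect inside the growing region upgrades the resulting uniform positive lower bound into $u\to 1$ uniformly on $\{|x|\le c_1 t\}\supset\{|x|\le ct\}$, which proves~(ii).

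The main obstacle will be the sub-solution construction underlying~(ii). The absence of any smoothing effect for the nonlocal diffusion, combined with the nonlocal renewal boundary condition $u(t,0,x)=\int_0^{a^+}\gamma(a)u(t,a,x)\,da$ and the integral transmission term, makes the classical Aronson--Weinberger compactly supported sub-solution inapplicable as is. A careful spectral analysis of the truncated linearized age operator on $(-L,L)$ will be needed to show that its principal eigenvalue becomes positive once $L$ is large enough, to justify that the extension by zero of the principal eigenfunction indeed gives a sub-solution of the full nonlocal problem, and to interface this bound with the monotone iteration already used in the proof of Theorem \ref{main1}.
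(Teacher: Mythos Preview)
Your argument for part~(i) is essentially the paper's: it linearizes at $0$, uses the positive eigenfunction $\phi$ from \eqref{phi_2} to build an exponential super-solution, and lets the gap $c-c'$ force decay. The only cosmetic difference is that you symmetrize the super-solution to handle both tails at once, while the paper does one side and invokes the symmetry of $J$.

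For part~(ii) your route is genuinely different and substantially harder than necessary. The paper does not construct a compactly supported sub-solution via a truncated principal eigenvalue problem in the moving frame. Instead it exploits a separation of variables: with $\phi$ the same age eigenfunction as above and $\lambda_0=-s_0>0$, the product $\underline{\mathbf u}(t,a,x)=\phi(a)\,v(t,x)$ is a sub-solution of \eqref{IVP-u} provided $v$ solves the \emph{scalar} Fisher--KPP equation $\partial_t v=J\ast v-v+\lambda_0 v(1-Pv)$ for a suitable constant $P>0$. The inner spreading for this scalar equation is already known (cited from Ducrot--Jin), and a short computation shows its minimal speed equals $c^\ast$. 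Corollary~\ref{CORO} is then used to guarantee that $u$ becomes uniformly positive after time $a^+$, so the scalar initial datum $v_0$ can be placed beneath it; comparison plus the hair-trigger Lemma~\ref{HTE} upgrade the resulting lower bound to $1$.

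Your truncated-eigenvalue approach could in principle be made to work, but it forces you to develop a principal-eigenvalue theory for the age-structured nonlocal operator on $(-L,L)$, to check that the zero extension remains a sub-solution despite the nonlocal convolution \emph{and} the nonlocal renewal condition at $a=0$, and to verify that the eigenvalue is positive for large $L$ precisely when $c_1<c^\ast$. None of this is needed once you observe that the $(a,x)$-dependence factors through $\phi(a)$.
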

\begin{remark}
	{\rm The nonexistence of traveling wave solution of \eqref{original} for $c<c^*$ is a direct consequence of the above spreading property (Theorem \ref{SS}).
}
\end{remark}
Since the functions $\pi$ and $\ga$ do not depend on the spatial variable $x$, the above spreading speed results are established by comparing the solutions of \eqref{IVP-u} with whose of the nonlocal diffusion problem with constant demographic functions. More precisely, for outer spreading, namely $c>c^*$, we can use the classical super-solution based on the traveling wave profile, while for inner spreading, namely $c<c^*$, we construct sub-solutions with suitable wave speed using a Fisher-KPP equation with nonlocal diffusion. 
  
The paper is organized as follows. In Section \ref{P}, we discuss the well-posedness of \eqref{IVP-u} and construct suitable super-/sub-solutions for equation \eqref{w-equation}. In Section \ref{T},  we prove the existence of traveling wave solutions via monotone iteration method and derive regularity estimates for  traveling wave solutions. In Section \ref{S}, we study the spreading speed for some solutions of \eqref{IVP-u} and complete the proof of Theorem \ref{SS}.

\section{Preliminaries}\label{P}

\subsection{Well-posedness of \eqref{IVP-u}}\label{well}
In this section, we first study the well-posedness of \eqref{IVP-u}. To this aim, we introduce some functional framework and we define
 \begin{equation*}
 X=C_b(\R),\;\; \mathcal X=X\times L^1((0, a^+), X)\text{ and }\mathcal X_0=\{0_X\}\times L^1((0, a^+), X),
\end{equation*} 
wherein $0_X$ denotes the zero element in $X$. These spaces become Bamach spaces when they are endowed with the usual product norms and we also introduce their positive cones as follows
\begin{eqnarray}
	\mathcal{X}^+&=&X_+\times L^1_+((0, a^+), X)\nonumber\\
	&=&X_+\times\{u\in L^1((0, a^+), X): u(a, \cdot)\in X_+,\; \text{a.e. in }(0, a^+)\},\nonumber\\
	\mathcal{X}^+_0&=&\mathcal{X}^+\cap \mathcal{X}_0\text{ with } X_+:=\{u\in C_b(\R): u\ge0\}.\nonumber
\end{eqnarray}
We also define the linear positive and bounded convolution operator $T\in\mathcal L(X)$ by
\begin{equation}\label{op-K}
	[T\varphi](\cdot)=\int_\R J(\cdot-y)\varphi(y)dy,\;\forall \varphi\in X.
\end{equation}	
Note that due to Assumption \ref{Assump}-(iii) one has
\begin{equation}\label{norm-K}
	\|T\|_{\mathcal L(X)}\leq \int_{\R} J(y)dy:=1.
\end{equation}
Next define the operator $\mathcal B: dom(\mathcal B)\subset\mathcal X\to\mathcal X$ by
$$
dom(\mathcal B)=\{0_X\}\times W^{1, 1}((0, a^+), X)\text{ and }\mathcal B\begin{pmatrix} 0\\ \psi\end{pmatrix}=\begin{pmatrix}-\psi(0)\\  -\frac{\partial\psi}{\partial a}+[T-I]\psi\end{pmatrix},\;\;\forall \begin{pmatrix} 0\\ \psi\end{pmatrix}\in dom(\mathcal B).
$$
Note that $\mathcal B$ is a closed Hille-Yosida operator. Moreover, define the nonlinear operator $\mathcal C$ from $\mathcal X_0$ to $\mathcal X$ as follows:
$$
\mathcal C\begin{pmatrix} 0\\ \psi\end{pmatrix}=\begin{pmatrix}\int_{0}^{a^+}\ga(a)\psi(a)da\\  \int_0^{a^+}K(\cdot, a')\pi(a')\psi(a')da'\,(1-\psi)\end{pmatrix},\;\forall\begin{pmatrix} 0\\ \psi\end{pmatrix}\in \mathcal X_0.
$$
Then by identifying $U(t)=\begin{pmatrix}
	0\\ u(t)
\end{pmatrix}$, problem \eqref{IVP-u} rewrites as the following abstract Cauchy problem:
\begin{equation}\label{Cauchy}
	\begin{cases}
		\frac{dU}{dt}=\mathcal{B}U+\mathcal CU,\\
		U(0)=U_0,
	\end{cases}\text{ with }U_0=\begin{pmatrix}
		0\\ u_0
	\end{pmatrix}\in\mathcal{X}_0.
\end{equation}
Based on the Lipshcitz property of $\mathcal C$, by Thieme \cite{thieme1998positive,thieme2009spectral} or Magal and Ruan \cite{magal2018theory}, the existence and uniqueness of a mild solution for \eqref{Cauchy} is guaranteed. Next, recalling the definition of $\mathcal{B}$, one may observe that $\mathcal{B}$ is resolvent positive. Moreover, there exists some constant $L>0$ such that the operator $\mathcal C+LId_X$ is monotone on the subset $\mathcal S_0\subset \mathcal X_0$ given by
\begin{equation*}
\mathcal S_0:=\left\{\begin{pmatrix}0\\\varphi\end{pmatrix}\in \mathcal X_0,\;\;\varphi\leq 1\right\},
\end{equation*}
which means that for all $(U,V)\in \mathcal S_0$ one has
$$
U\le V\Rightarrow \mathcal CU+LU\le \mathcal CV+LV,
$$ 
where the partial order $\le$ in $\mathcal X_0$ and $\mathcal X$ is induced by the positive cones $\mathcal{X}_0^+$ and $\mathcal{X}^+$, respectively. 
To see this let us observe that for any $k>0$, if $L>k$ then the function $(v,u)\mapsto v(1-u)+Lu$ is increasing with respect to both variables $(v,u)\in (-\infty,k]\times (-\infty,1]$.\\
Due to the above monotonicity property, one also obtains that $\mathcal CU+LU\in \mathcal X^+$ for all $U\in \mathcal X_0^+\cap \mathcal S_0$. Hence the result by Magal et al. \cite[Theorem 4.5]{magal2019monotone} applies and ensures that 
Problem \eqref{Cauchy} is well posed in $\mathcal X_0^+\cap \mathcal S_0$, which is forward invariant and the comparison principle holds for \eqref{Cauchy} in $\mathcal S_0$. The latter result can be written as follows. 
\begin{lemma}[Comparison Principle]\label{wcp}
	Assume that $U_0, V_0\in\mathcal{S}_0$ and $U_0\leq V_0$, then the mild solutions $U(t)$ and $V(t)$ to \eqref{Cauchy} with initial data $U_0$ and $V_0$ respectively, satisfy $U(t)\leq V(t)$ for any $t\ge0$.
	
\end{lemma}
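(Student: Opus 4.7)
The plan is to exploit the decomposition of the right-hand side of \eqref{Cauchy} into a linear Hille--Yosida part whose associated integrated semigroup is positive, plus a shifted nonlinearity that is monotone on $\mathcal{S}_0$. A Picard-type monotone iteration then compares the two mild solutions.

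First, I would note that since $U_0, V_0 \in \mathcal{S}_0$, the forward invariance of $\mathcal{S}_0$ under the flow (already established in the discussion above the statement, via the Magal et al.\ theorem) gives $U(t), V(t) \in \mathcal{S}_0$ for all $t \ge 0$. In particular, the monotonicity of $\mathcal{C} + L\,\mathrm{Id}$ asserted in the excerpt applies along both trajectories.

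Second, I would rewrite \eqref{Cauchy} as $\tfrac{dU}{dt} = (\mathcal{B} - L\,\mathrm{Id})U + (\mathcal{C} + L\,\mathrm{Id})U$, and similarly for $V$. The shifted generator $\mathcal{B} - L\,\mathrm{Id}$ is still Hille--Yosida and resolvent positive: for $\lambda$ large, the resolvent $(\lambda - \mathcal{B})^{-1}$ is explicitly built from the positive convolution operator $T$, the positive integration-along-characteristics operator solving $\psi' = [T-I]\psi$, and the positive boundary integral $\int_0^{a^+}\gamma(a)\psi(a)\,da$, so positivity is inherited. Using the integrated-semigroup variation-of-constants formula associated to $\mathcal{B} - L\,\mathrm{Id}$, I would then set up the iteration
\begin{equation*}
V^{n+1}(t) = \mathcal{L}_L(t)V_0 + \int_0^t S_L(t-s)\bigl[(\mathcal{C} + L\,\mathrm{Id})V^n(s)\bigr]\,ds,
\end{equation*}
with $V^0 \equiv V_0$, where $\mathcal{L}_L(t)$ denotes the linear propagator of the homogeneous equation and $S_L$ the positive integrated semigroup (understood in the Magal--Ruan sense); an analogous iteration defines $U^n$ starting from $U_0$. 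Because $\mathcal{C} + L\,\mathrm{Id}$ is monotone on $\mathcal{S}_0$ and both $\mathcal{L}_L$ and the convolution with $S_L$ preserve the positive cone $\mathcal{X}^+$, induction on $n$ yields $U^n(t) \le V^n(t)$ and monotonicity of $V^n$ (resp.\ $U^n$) in $n$ on a short time interval. A standard contraction argument identifies the common limit with the unique mild solution, so passing to the limit gives $U(t) \le V(t)$ on $[0,\tau]$. One then iterates the argument on $[\tau, 2\tau]$, $[2\tau, 3\tau]$, etc., using that both solutions remain in $\mathcal{S}_0$ by forward invariance, to cover all $t \ge 0$.

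The main technical obstacle is that $\mathcal{B}$ is nondensely defined (its domain lies in $\{0_X\} \times W^{1,1}((0,a^+),X)$, whose closure is $\mathcal{X}_0 \subsetneq \mathcal{X}$), so the classical $C_0$-semigroup machinery does not apply directly and one must work throughout in the integrated-semigroup / mild formulation. This is precisely the framework developed by Magal et al., so in practice the lemma follows by a direct appeal to \cite[Theorem~4.5]{magal2019monotone} once the monotonicity of $\mathcal{C} + L\,\mathrm{Id}$ on $\mathcal{S}_0$ and the resolvent positivity of $\mathcal{B}$ have been verified, both of which are recorded in the paragraph preceding the statement.
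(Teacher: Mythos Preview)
Your proposal is correct and matches the paper's approach: the paper does not give a separate proof of this lemma at all, but simply states it as a direct consequence of \cite[Theorem~4.5]{magal2019monotone} once the resolvent positivity of $\mathcal{B}$ and the monotonicity of $\mathcal{C}+L\,\mathrm{Id}$ on $\mathcal{S}_0$ have been checked in the preceding paragraph. Your additional sketch of the underlying monotone Picard iteration with the integrated semigroup is a faithful outline of what that cited theorem actually does, so there is no substantive difference.
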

It follows that the comparison principle also holds for \eqref{IVP-u}.

\subsection{Limiting behavior}\label{LB}
In this sub-section, we consider the steady state equation \eqref{equi2} below. The study of the problem will be used in the sequel to study the limit behavior of traveling wave solutions. 
    	\begin{equation}\label{equi2}
    		\begin{cases}
    			\frac{d u(a)}{d a}=\int_0^{a^+}K(a, a')\pi(a')u(a')da'\,(1-u(a)),\; a\in [0, a^+],\\
    			u(0)=\int_0^{a^+}\ga(a)u(a)da.
    		\end{cases}
    	\end{equation}
Due to Assumption \ref{Assump} $(i)$, let us observe that \eqref{equi2} has constant solutions: $a\mapsto 0$ and $a\mapsto 1$. We will show that \eqref{equi2} only has these two solutions with values into $[0, 1]$. 
More specifically we have the following result.
\begin{lemma}\label{LE-lim}
Problem \eqref{equi2} only has the two solutions $u\in W^{1,1}(0,a^+)$ such that $0\leq u\leq 1$: $u\equiv 0$ and $u\equiv 1$.
\end{lemma}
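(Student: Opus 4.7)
The plan is to exploit the monotonicity forced by the ODE together with the averaging structure of the boundary condition, then read off the two possible values of $u(0)$ from the ODE itself. First I would observe that since $0\le u\le 1$ and $K,\pi\ge 0$, the right-hand side of the differential equation is nonnegative, so $u'(a)\ge 0$ almost everywhere; because $u\in W^{1,1}(0,a^+)$ is absolutely continuous, $u$ is nondecreasing on $[0,a^+]$, in particular $u(a)\ge u(0)$ for every $a$.

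Next I would turn the integral boundary condition into a pointwise identity. Using $\int_0^{a^+}\gamma(a)\,da=1$ from Assumption \ref{Assump}-(i), the condition $u(0)=\int_0^{a^+}\gamma(a)u(a)\,da$ rewrites as
\[
\int_0^{a^+}\gamma(a)\bigl(u(0)-u(a)\bigr)\,da=0.
\]
Since the integrand is nonpositive and continuous, it must vanish everywhere; hence $u(a)=u(0)$ on the open set $V:=\{a\in[0,a^+]:\gamma(a)>0\}$. By Assumption \ref{Assump}-(ii), $\gamma\pi\not\equiv 0$ and $\pi>0$ everywhere, so $V$ is nonempty and therefore contains a nondegenerate subinterval $I$.

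Finally I would feed this information back into the ODE on $I$. Since $u\equiv u(0)$ on $I$, $u'\equiv 0$ on $I$, so
\[
\left(\int_0^{a^+}K(a,a')\pi(a')u(a')\,da'\right)\bigl(1-u(0)\bigr)=0,\qquad a\in I.
\]
If $u(0)=1$, then $u(a)\ge u(0)=1$ combined with $u\le 1$ yields $u\equiv 1$. Otherwise $1-u(0)>0$, so the integral vanishes for each $a\in I$; but $K>0$ on $[0,a^+]\times[0,a^+]$ by Assumption \ref{Assump}-(iv) and $\pi>0$ on $[0,a^+]$ by Assumption \ref{Assump}-(ii), which forces $u\equiv 0$ almost everywhere, and then everywhere by continuity. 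Thus $u(0)\in\{0,1\}$ and the lemma follows.

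I do not expect any serious obstacle: the heart of the argument is the integral identity combined with monotonicity, which pins down $u$ on a subinterval where $\gamma>0$, and the strict positivity of $K$ and $\pi$ then propagates this to the whole interval. The only point needing a touch of care is upgrading the a.e.\ vanishing of $\gamma(a)(u(0)-u(a))$ to a pointwise statement on $V$, which follows at once from the continuity of $\gamma$ and $u$.
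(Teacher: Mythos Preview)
Your proof is correct and follows a genuinely different, more direct route than the paper's. The paper first argues that any nontrivial solution must satisfy $0<u^*(a)<1$ throughout, using touching-point arguments and the ODE; it then introduces the scaling parameter $\kappa^*=\inf\{\kappa>0:\kappa u^*\ge 1\}$, assumes $\kappa^*>1$, and derives a contradiction by analyzing the rescaled function $v^*=\kappa^* u^*$ at a point where it touches $1$, eventually forcing $u^*\ge 1$ and hence $u^*\equiv 1$.

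Your argument bypasses the scaling trick entirely. The key observation you exploit---that $u'\ge 0$ together with $\int_0^{a^+}\gamma=1$ turns the boundary condition into the identity $\int_0^{a^+}\gamma(a)(u(0)-u(a))\,da=0$ with a sign-definite integrand---immediately freezes $u$ to the constant value $u(0)$ on the support of $\gamma$; the strict positivity of $K$ and $\pi$ then does the rest in one line. This is shorter and more transparent, and it makes clearer exactly where each hypothesis (notably $\int\gamma=1$, which encodes the choice $p=1$) is used. The paper's approach, on the other hand, is a template that would adapt more readily if the boundary weight did not have unit mass or if one wanted to compare with a non-constant steady state, since it does not rely on the averaging identity but only on touching-point comparisons.
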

\begin{proof}
To prove the above lemma we argue by contradiction by assuming that there exists a function $u^*\in W^{1, 1}(0, a^+)$ with $u^*\not\equiv 0, 1$, which is another solution of \eqref{equi2} satisfying $0\le u^*(a)\le 1$ for all $a\in [0,a^+]$. Observe that since $K$ is continuous and $u^*$ is continuous by Sobolev embedding, then $a\mapsto u^*(a)$ is of class $C^1$ on $[0,a^+]$.

First we claim that $0<u^*(a)<1$ holds for all $a\in [0, a^+]$. To prove this claim first note that if $u^*$ touches $1$ at some $a_0\in [0, a^+]$, then by the Cauchy Lipschitz theorem for ODE, one has $u^*\equiv 1$, which is a contradiction. Next if $u^*$ touches $0$ at some point $a_0\in [0, a^+]$, namely $u^*(a_0)=0$, then two cases may appear: either $a_0=0$ or $a_0\in (0, a^+]$.
If $a_0=0$ then the boundary condition at $a=0$ yields
$$
0=u(a_0)=\int_0^{a^+}\ga(a)u^*(a)da.
$$
But since $\gamma\not\equiv 0$ there exists $a_1\in (0,a^+)$ such that $u^*(a_1)=0$.
Therefore it is sufficient to consider the case where there exists $a_0\in (0,a^+]$ such that $u^*(a_0)=0$. Now to reach a contradiction, recalling $u^*\geq 0$, there holds $u(a_0)=0$ with $a_0\in (0,a^+]$ which ensures that
$$
\frac{du(a_0)}{da}\leq 0.
$$ 
Plugging this information together with $u^*(a_0)=0$ into \eqref{equi2} yields
	$$
	\int_0^{a^+}K(a_0, a')\pi(a')u^*(a')da'=0,
	$$
	which by the positivity of $K$ and $\pi$ implies that $u^*(a)\equiv0$, which is a contradiction again. Thus we have obtained that $0<u^*(a)<1$ for all $a\in [0,a^+]$.

Now to complete the proof of the lemma let us obtain a contradiction with the existence of $u^*$. To that aim let us show that one has $u^*(a)\geq 1$ for all $a\in [0,a^+]$ so that $u^*\equiv 1$, a contradiction with $u^*\not\equiv 1$. 
To reach this issue, let us define $\kappa^*$ by
$$ 
\kappa^*:=\inf\{\kappa>0: \kappa u^*(a)\ge 1,\,\,\forall a\in[0, a^+]\}. 
$$
Note that $\kappa^*$ is well-defined. Let us show that $\kappa^*\leq1$ by a contradiction argument and assuming that $\kappa^*>1$. Since $\kappa^*u^*(a)\ge 1$ for all $a\in [0,a^+]$, we consider two cases: (i) $\kappa^*u^*\ge 1$ and  $\kappa^*u^*\not\equiv 1$ and (ii) $\kappa^*u^*\equiv 1$.

\noindent {\bf Case (i)} $\kappa^*u^*\ge 1, \kappa^*u^*\not\equiv 1$. 
Set $v^*(a)=\kappa^* u^*(a)$ that satisfies the problem
\begin{equation}\label{equi2-bis}
    		\begin{cases}
    			\frac{d v^*(a)}{d a}=\int_0^{a^+}K(a, a')\pi(a')v^*(a')da'\,\left(1-\frac{1}{\kappa^*}v^*(a)\right),\; a\in [0, a^+],\\
    			v^*(0)=\int_0^{a^+}\ga(a)v^*(a)da.
    		\end{cases}
    	\end{equation}
Note that from the definition of $\kappa^*$, there exists $a_0\in [0,a^+]$ such that $v^*(a_0)=1$. Due to the condition
$$
\int_0^{a^+}\gamma(a)da=1,
$$
as above, the boundary condition ensures that one can choose $a_0\in (0,a^+]$.
Hence we have
$$
\frac{d v^*(a_0)}{d a}\leq 0,
$$
so that
$$
\int_0^{a^+}K(a_0, a')\pi(a')v^*(a')da'\,\left(1-\frac{1}{\kappa^*}\right)\leq 0.
$$
Since $\kappa^*>1$ and $K>0$, this yields $v^*\equiv 0$, a contradiction with $0<u^*<1$, which proves $\kappa^*\leq 1$ in the first case (i).\\

\noindent{\bf Case (ii)} $\kappa^*u^*=v^*\equiv1$. In that case \eqref{equi2-bis} yields for all $a\in (0,a^+)$ one has
$$
0=\int_0^{a^+}K(a, a')\pi(a')da'\,\left(1-\frac{1}{\kappa^*}\right),
$$
which is a contradiction again, since $\kappa^*>1$ and thus the right hand side is positive. Thus we still have $\kappa^*\leq1$. \\
To sum-up  we have proved that $u^*(a)\ge 1$ for all $a\in [0,a^+]$. We also have $1\ge u^*$. This proves that $1$ is the unique positive solution of \eqref{equi2} and complete the proof of the lemma.
\end{proof}

\subsection{Construction of sub-and super-solutions}\label{css}

In this section, we will construct exponentially bounded sub and super-solutions for problem \eqref{w-equation}. Exponential bound is used to define the convolution with the kernel $J$. As explained above, theses functions will be of particular importance to derive our existence result. 
\subsubsection*{Super-solution}
We start with the construction of a super-solution. We are looking for an exponentially bounded  function $\overline{U}=\overline{U}(a,\xi)>0$ satisfying
\begin{equation}\label{super-solution1}
\begin{cases}
\frac{\partial U(a, \xi)}{\partial a}\geq \left(J\ast_\xi U-U\right)(a, \xi)+c\frac{\partial U(a, \xi)}{\partial \xi}+(1-U(a,\xi))\int_0^{a^+}K(a, a')\pi(a')U(a', \xi)da',\\
U(0, \xi)\geq\int_0^{a^+}\ga(a)U(a, \xi)da,
\end{cases}
\end{equation}
for some speed $c>0$ to be specified latter. 

Since $\overline U>0$ it is sufficient to construct $\overline U$ as a solution of the following linear problem
\begin{equation}\label{super-solution}
\begin{cases}
\frac{\partial U(a, \xi)}{\partial a}\ge \left(J\ast_\xi U-U\right)(a, \xi)+c\frac{\partial U(a, \xi)}{\partial \xi}+\int_0^{a^+}K(a, a')\pi(a')U(a', \xi)da',\\
U(0, \xi)\ge\int_0^{a^+}\ga(a)U(a, \xi)da.
\end{cases}
\end{equation}
We now construct such a super-solution by looking for it with the following form
$$ 
\overline{U}(a, \xi)=e^{-\lambda \xi}\phi(a),
$$
with $\lambda>0$ and $\phi(a)>0$. Due to the differential inequality \eqref{super-solution}, we are looking for $\lambda>0$ and $\phi > 0$ such that
\begin{equation}\label{overline U}
\begin{cases}
\frac{\partial\overline{U}(a, \xi)}{\partial a}=\left(J\ast_\xi \overline{U}-\overline{U}\right)(a, \xi)+c\frac{\partial\overline{U}(a, \xi)}{\partial \xi}+\int_0^{a^+}K(a, a')\pi(a')\overline{U}(a', \xi)da',\\
\overline{U}(0, \xi)=\int_0^{a^+}\gamma(a)\overline{U}(a, \xi)da.
\end{cases}
\end{equation}
To solve this problem, we end-up with the following equation for $\la>0, c\in\R$ and $\phi=\phi(a)>0$.
\begin{equation}\label{phi}
	\begin{cases}
		\phi'(a)=\La(\la, c)\phi(a)+\int_0^{a^+}K(a, a')\pi(a')\phi(a')da',\\
		\phi(0)=\int_0^{a^+}\ga(a)\phi(a)da,
	\end{cases}
\end{equation}
wherein we have set
$$
\La(\la, c)=\int_\R J(y)e^{\la y}dy-1-c\la.
$$
Setting $s=\La(\la, c)$, the above problem is equivalent to find $s\in\R$ and $\phi=\phi(a)>0$ such that
\begin{equation}\label{phi_2}
\phi(a)=e^{sa}\int_0^{a^+}\ga(a)\phi(a)da+\int_0^ae^{s(a-l)}\int_0^{a^+}K(l, a')\pi(a')\phi(a')da'dl,\;\forall a\in [0,a^+].
\end{equation}
Define the linear operator $L_s\in \mathcal L(L^1(0, a^+))$ for any parameter $s\in\R$ and $\phi\in L^1(0, a^+)$ as follows,
\begin{equation}\label{L_s}
[L_s\phi](a):=e^{sa}\int_0^{a^+}\ga(a)\phi(a)da+\int_0^ae^{s(a-l)}\int_0^{a^+}K(l, a')\pi(a')\phi(a')da'dl.
\end{equation}
	We claim now that $L_s$ enjoys the following properties:
	\begin{itemize}
		\item [(1)] $L_s$ is a positive operator, that is $L_s\left(L^1_+(0,a^+)\right)\subset L^1_+(0,a^+)$;
		
		\item [(2)] $L_s$ is a compact operator;
		
		\item [(3)] $L_s$ is a non-supporting operator.
	\end{itemize}
    Note that (1) is true due to Assumption \ref{Assump}-(i), (ii) and (iv). Next, applying Fr{\'e}chet-Kolmogorov-Riesz compactness theorem to $L_s$ yields (2). To prove (3), by taking the dual product between $L_s\phi$ with $\phi\in L^1_+(0, a^+)\setminus\{0\}$ and any $\psi\in L^\infty_+(0, a^+)\setminus\{0\}$, we obtain 
    $$
    \langle \psi, L_s\phi\rangle=\int_0^{a^+}\ga(a)\phi(a)da\int_0^{a^+}\psi(a)e^{sa}da+\int_0^{a^+}\psi(a)\int_0^ae^{s(a-l)}\int_0^{a^+}K(l, a')\pi(a')\phi(a')da'dlda.
    $$
    Recalling Assumption \ref{Assump}-(ii) and (iv), one can see that the second term of the above equality is positive. Thus (3) is proved.
    
    Hence by Krein-Rutman theorem, the spectral radius $\rho(L_s)$ is the principal eigenvalue of $L_s$ (see Sawashima \cite{sawashima1964spectral}) and moreover the map $s\to \rho(L_s)$ is continuous and strictly increasing (see Marek \cite{marek1970frobenius}). In addition, notice that $\rho(L_s)\to0$ as $s\to-\infty$.
    
    On the other hand, for $s=0$, applying the constant function $a\mapsto1$ to $L_0$ yields, due to Assumption \ref{Assump}-(i),
    $$
    [L_01](a)=1+\int_0^a\int_0^{a^+}K(l, a')\pi(a')da'dl>1,\;\forall a\in [0,a^+],
    $$
    so that $\rho(L_0)>1$. It follows that there exists a unique $s_0<0$ such that $\rho(L_{s_0})=1$. We now consider the equation $\La(\la, c)=s_0$, that reads as
\begin{equation}\label{s_0}
	\int_\R J(y)e^{\la y}dy-1-c\la=s_0.
\end{equation}
Thus we have the following lemma.
\begin{lemma}\label{speed}
	There exists $c^*>0$ such that equation \eqref{s_0} with respect to the unknown $\la$ has no positive solution when $c<c^*$, only one solution for $c=c^*$, and exactly two solutions for $c>c^*$. In addition for $c>c^*$, these two solutions $\lambda_1$ and $\lambda_2$ are such that
	$$ 
	0< \lambda_1<\lambda(c)<\lambda_2,
	$$
	where $\lambda(c)$ is the unique root of the following equation
	$$
	\int_\R J(y)e^{\lambda(c) y}ydy=c.
	$$
\end{lemma}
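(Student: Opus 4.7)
The plan is to study the geometry of $g(\lambda):=\int_\R J(y)e^{\lambda y}\,dy$, which by Assumption~\ref{Assump}-(iii) is finite and real-analytic on $[0,\infty)$, so that equation \eqref{s_0} rewrites as the intersection problem
$g(\lambda)=1+s_0+c\lambda$.
The symmetry of $J$ yields $g'(0)=\int_\R J(y)y\,dy=0$, while $g''(\lambda)=\int_\R J(y)y^2e^{\lambda y}\,dy>0$ combined with $g(0)=1$ shows that $g$ is strictly convex with global minimum $1$ at $\lambda=0$. Moreover, since $J(0)>0$ and $J$ is continuous, $J$ is positive on some interval $(0,\delta)$, so $g(\lambda)/\lambda\to+\infty$ as $\lambda\to+\infty$. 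Geometrically we are seeking intersections of the strictly convex curve $\lambda\mapsto g(\lambda)$ with the affine line $\lambda\mapsto 1+s_0+c\lambda$, which lies strictly below $g$ at $\lambda=0$ (because $s_0<0$) and has positive slope $c$.

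For each $c>0$, set $h_c(\lambda):=g(\lambda)-c\lambda$. Then $h_c$ is strictly convex on $[0,\infty)$, $h_c(0)=1$, $h_c'(0)=-c<0$, and $h_c(\lambda)\to+\infty$, so $h_c$ attains a unique minimum at a point $\lambda(c)>0$ characterized by $g'(\lambda(c))=c$. Since $g'$ is continuous, strictly increasing on $[0,\infty)$ and surjects onto $[0,\infty)$, the map $c\mapsto\lambda(c)$ is a continuous strictly increasing bijection of $(0,\infty)$ onto itself, and it coincides with the $\lambda(c)$ of the statement.

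Define $\Phi(c):=\min_{\lambda\geq 0} h_c(\lambda)=g(\lambda(c))-c\lambda(c)$, i.e.\ minus the Legendre transform of $g$ evaluated at $c$. An envelope computation gives $\Phi'(c)=-\lambda(c)<0$, so $\Phi$ is continuous and strictly decreasing on $(0,\infty)$ with $\Phi(0^+)=1$; moreover superlinearity of $g$ forces $\Phi(c)\to-\infty$ as $c\to+\infty$ (equivalently, integration by parts gives $\Phi(c)=1-\int_0^{\lambda(c)}sg''(s)\,ds$, and the divergence of $g''$ at $+\infty$ yields the claim). Since $1+s_0<1$, the intermediate value theorem yields a unique $c^*>0$ with $\Phi(c^*)=1+s_0$.

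Counting the roots of $h_c(\lambda)=1+s_0$ is then immediate from strict convexity. If $c<c^*$ then $h_c\geq\Phi(c)>1+s_0$ and no root exists; if $c=c^*$ the single root is $\lambda(c^*)$; if $c>c^*$, then $\Phi(c)<1+s_0<1=h_c(0)$ while $h_c\to+\infty$, so by strict convexity there are exactly two roots $\lambda_1<\lambda(c)<\lambda_2$, with $\lambda_1>0$ because $h_c$ is decreasing on $(0,\lambda(c))$. The only slightly delicate step is the limit $\Phi(c)\to-\infty$, which I expect to handle via superlinearity of $g$ (or, equivalently, coercivity of the Legendre transform $g^*$); everything else is standard convex analysis.
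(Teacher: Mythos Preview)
Your proof is correct and follows essentially the same route as the paper: both analyze the strictly convex function $\lambda\mapsto\Lambda(\lambda,c)$ (your $h_c-1$), locate its unique minimizer $\lambda(c)$ via $g'(\lambda(c))=c$, use the envelope relation $\frac{d}{dc}\Lambda(\lambda(c),c)=-\lambda(c)<0$ to obtain monotonicity of the minimum value, and then invoke the intermediate value theorem once the limits at $c\to0^+$ and $c\to\infty$ are established. The only cosmetic differences are your Legendre-transform framing and your integration-by-parts formula for $\Phi(c)\to-\infty$, whereas the paper verifies this limit by a direct symmetrization of the integral; both arguments rest on the same superlinearity of $g$.
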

We will see in the proof of the lemma that the critical value $c^*$ is implicitly given by the equation
\begin{equation}\label{c*}
	\int_\R J(y)e^{\la(c^*)y}dy-1-c^*\la(c^*)=s_0,
\end{equation}
which uniquely provides the value of $c^*>0$.

\begin{proof}
First, observe that 
$$
\frac{\partial\La(\la, c)}{\partial\la}=\int_\R J(y)e^{\la y}ydy-c,\;\;\frac{\partial^2\La(\la, c)}{\partial\la^2}=\int_\R J(y)e^{\la y}y^2dy>0,
$$ 
and
$$
\int_\R J(y)e^{\la y}ydy=\int_0^\infty yJ(y)\left[e^{\la y}-e^{-\la y}\right]dy\to \pm\infty\text{ as }\lambda\to\pm\infty.
$$
As a consequence, for any $c>0$, the function $\la\mapsto \frac{\partial\La(\la, c)}{\partial\la}$ is non-decreasing from $-\infty$ to $\infty$ and, using again the symmetry of $J$, satisfies 
$$
\frac{\partial\La(0, c)}{\partial\la}=-c<0.
$$
Hence for any $c>0$ there exists a unique $\lambda(c)>0$ such that $\int_\R J(y)e^{\lambda y}ydy-c<0$ (resp. $>0$) for all $\lambda<\lambda(c)$ (resp. for all $\lambda>\lambda(c)$).
We also deduce that for any $c>0$ the function $\lambda\to \La(\lambda, c)$ is decreasing on $(0, \lambda(c))$ and increasing on $(\lambda(c), \infty)$.

As a consequence of the above analysis, the solution of \eqref{s_0} relies on the position of its minimum over $[0,\infty)$, i.e. $\La(\la(c), c)$, with respect to $s_0$. This quantity is given by the formula
$$
\La(\lambda(c), c)=\int_\R J(y)e^{\la(c)y}dy-1-c\la(c).
$$
To study the equation $\La(\lambda(c), c)=s_0$, let us note that the implicit function theorem ensures that the map $c\mapsto \lambda(c)$ is of class $C^1$ with respect to $c\in (0,\infty)$, so is the function $c\mapsto \La(\lambda(c), c)$ and we have:
\begin{eqnarray}
	\frac{d \La(\lambda(c), c)}{d c}=\frac{\partial \La(\lambda(c), c)}{\partial \lambda}\frac{\partial\lambda(c)}{\partial c}+\frac{\partial\La(\lambda(c), c)}{\partial c}=-\lambda(c)<0.\nonumber
\end{eqnarray}
The above expression implies that $c\mapsto \La(\lambda(c), c)$ is monotonically decreasing with respect to $c$. Furthermore, this function takes the value $0$ for $c=0$ and tends to $-\infty$ as $c\to\infty$. Indeed since $J$ is symmetric and has a unit mass, we have
\begin{eqnarray}
	&&\int_\R J(y)e^{\lambda(c)y}dy-c\lambda(c)\nonumber\\
	&=&\int_0^\infty J(y)e^{\lambda(c)y}\left[1-\lambda(c)y\right]dy+\int_0^\infty J(y)e^{-\lambda(c)y}\left[1+\lambda(c)y\right]dy\to-\infty,\text{ as }c\to\infty.\nonumber 
		\end{eqnarray}
Recalling that $s_0<0$, this proves there exists a unique $c^*>0$ such that
$$
\La(\lambda(c), c)\begin{cases} <s_0,\;\forall c\in (c^*,\infty),\\
>s_0,\;\forall c\in (0,c^*),
\end{cases}
$$
that concludes the proof of the lemma.
\end{proof}

To sum-up for all $c>c^*$, the function $\overline{U}(a,\xi)=e^{-\lambda\xi}\phi(a)$, where $\lambda$ is a solution of \eqref{s_0}, is a super-solution, that satisfies \eqref{super-solution1}.
Note also that the constant function $(a,\xi)\mapsto 1$ is also a super-solution.

\subsubsection*{Sub-solution}
In this subsection, we construct a suitable sub-solution. Let $c>c^*$ be fixed and let $\la>0$ be the smallest solution of \eqref{s_0}. Moreover, let $\phi=\phi(a)$ be defined as in \eqref{phi_2} with $s=s_0$. Observe that the function $a\mapsto\phi(a)$ is continuous and $\phi(a)>0$ for any $a\in[0, a^+]$.

Now our aim is to construct an exponentially bounded sub-solution with the speed $c$, that is a solution
$\underline{U}=\underline{U}(a,\xi)$ such that
$$
\underline{U}\leq \min\left\{1, \overline{U}\right\},
$$
and satisfying the following differential inequality
 \begin{equation}\label{sub-solution1}
\begin{cases}
\frac{\partial \underline{U}(a, \xi)}{\partial a}\leq \left(J\ast_\xi \underline{U}-\underline{U}\right)(a, \xi)+c\frac{\partial \underline{U}(a, \xi)}{\partial \xi}+(1-\underline{U}(a,\xi))\int_0^{a^+}K(a, a')\pi(a')\underline{U}(a', \xi)da',\\
\underline{U}(0, \xi)\leq\int_0^{a^+}\ga(a)\underline{U}(a, \xi)da.
\end{cases}
\end{equation}
Note that we have
$$
1-\underline{U}\geq\max\{0, 1-\overline{U}\}\ge\left(1-\alpha e^{-\lambda \xi}\right)^+,
$$
for some sufficiently large constant $\alpha>0$ and where the superscript $+$ denotes the positive part.\\
Hence it is sufficient to look for an exponentially bounded function $\underline{U}$ satisfying
\begin{equation}\label{sub}
\begin{cases}
\frac{\partial\underline{U}(a,\xi)}{\partial a}\le J\ast_\xi \underline{U}(a,\xi)-\underline{U}(a,\xi)+c\frac{\partial\underline{U}(a,\xi)}{\partial \xi}+\int_0^{a^+}K(a, a')\pi(a')\underline{U}(a',\xi)da'\,(1-\alpha e^{-\lambda \xi})^+,\\
\underline{U}(0, \xi)\leq\int_0^{a^+}\ga(a)\underline{U}(a, \xi)da.
\end{cases}
\end{equation}
We look for such a function $\underline{U}$ of the form
\begin{equation}\label{sub2}
\underline{U}(a, \xi)=(e^{-\lambda \xi}-ke^{-(\lambda+\eta)\xi})\phi(a),
\end{equation}
where the function $\phi$ is defined in \eqref{phi_2} with $s=s_0$, while $k>0$ and $\eta>0$ have to be determined.
We split our analysis into two regions: $1-\alpha e^{-\lambda \xi}>0$ and $\left(1-\alpha e^{-\lambda \xi}\right)^+=0$.
We define $\xi_M$ by
$$
\xi_M:=\frac{1}{\lambda}\ln \alpha,
$$
so that the two above regions becomes $\{\xi>\xi_M\}$ and $\{\xi\leq \xi_M\}$.\\
\noindent{\bf Case 1 :} $\xi>\xi_M$.\\
First simple computations show that $\underline{U}$ satisfies the differential inequality \eqref{sub} for $\xi>\xi_M$ if and only if we have for all $\xi>\frac{1}{\la}\log\alpha$ and all $a\in(0, a^+)$,
\begin{eqnarray}\label{k}
&-k\phi(a)&\left(\int_\R J(y)e^{\lambda y}dy-c\lambda-\int_\R J(y)e^{(\lambda+\eta)y}dy+c(\lambda+\eta)\right)\nonumber\\
&&\leq\alpha\int_0^{a^+}K(a, a')\pi(a')\phi(a')da'\, e^{-\lambda \xi}(k-e^{\eta \xi}).
\end{eqnarray}
Then if we set
$$
p(\eta):=\int_\R J(y)e^{\lambda y}dy-c\lambda-\int_\R J(y)e^{(\lambda+\eta)y}dy+c(\lambda+\eta),
$$
the above inequality becomes for all $\xi>\frac{1}{\la}\log\alpha$ and all $a\in(0, a^+)$,
\begin{equation*}
-k\phi(a)p(\eta)\leq\alpha\int_0^{a^+}K(a, a')\pi(a')\phi(a')da'\, e^{-\lambda \xi}(k-e^{\eta \xi}).
\end{equation*}
Recalling that $\lambda<\lambda(c)$, function satisfies $p(0)=0$ and $p'(0)>0$. Therefore for any sufficiently small and nonnegative $\eta$, we have $p(\eta)>0$. Next, for $k$ sufficiently large and $\eta$ small enough, due to $\inf_{a\in[0, a^+]}\phi(a)>0$ it is sufficient to have for any $\xi>\xi_M$:
$$
-kp(\eta)\phi(a)+\alpha\int_0^{a^+}K(a, a')\pi(a')\phi(a')da'\, e^{(-\lambda+\eta)\xi}<0, \;\forall a\in[0, a^+].
$$
Therefore, with such a choice of parameters $k$ and $\eta$, inequality \eqref{k} is satisfied for any $\xi>\xi_M$, which completes Case 1.\\
\noindent{\bf Case 2 :} $\xi\leq \xi_M$.\\
For $\xi\leq \xi_M$, $\underline{U}$ satisfies \eqref{sub} if and only if the following inequality holds: 
$$
\int_0^{a^+}K(a, a')\pi(a')\phi(a')da'\,(1-ke^{-\eta \xi})\le k\phi(a)e^{-\eta \xi}p(\eta), \; \text{ for all $a\in[0, a^+]$ and $\xi\leq \xi_M$}.
$$
In order to reach this inequality, it is sufficient to have
$$
\left(\sup_{a\in [0,a^+]}\int_0^{a^+}K(a, a')\pi(a')\phi(a')da'\right) e^{\eta \xi_M}\le kp(\eta)\inf_{a\in [0,a^+]}\phi(a),
$$
which is satisfied for sufficiently large values of $k$, since $p(\eta)>0$. This completes the Case 2.\\
In summary, we have proved that $\underline U$ satisfies \eqref{sub} for all $\xi\in\R$ as soon as $\eta>0$ is small enough and $K>0$ is chosen large enough.

Next, define the functions 
$$
\underline{w}(a, \xi)=\underline{U}(a, \xi-ca),\; \overline{w}(a, \xi)=\overline{U}(a, \xi-ca),
$$ 
so that $\underline{w}\leq \overline{w}$.
These functions will be used in the next section for the construction of wave solutions and to derive our existence result.

\section{Traveling wave solution}\label{T}
In this section, we prove Theorems \ref{main1} and \ref{main2}.
\subsection{Existence}\label{existence}
In this subsection, we will employ a monotone iteration method to derive an existence result for traveling wave solutions. To this aim, we study the following nonlinear problem with $c\in\R$.
\begin{equation}\label{nonlinear}
\begin{cases}
\frac{\partial u}{\partial a}(a, \xi)=(J\ast_\xi u-u)(a, \xi)+\int_0^{a^+}K(a, a')\pi(a')u(a', \xi)da'\, (1-u(a, \xi)),&(a, \xi)\in(0, a^+)\times\R,\\
u(0, \xi)=\int_0^{a^+}\gamma(a)u(a, \xi+ca)da, &\xi\in\R.
\end{cases}
\end{equation}
\begin{definition}
	We say that a function $\overline{u}\in W^{1, 1}((0, a^+), C(\R))$ satisfying 
	\begin{equation}\label{exp}
    \sup_{a\in(0, a^+)}|\overline u(a, \xi)|=\mathcal O(e^{\tau|\xi|}), \text{ as }\xi\to\pm\infty, \text{ for some }\tau>0
    \end{equation}
    is a super-solution of \eqref{nonlinear}, if it satisfies that
	\begin{equation}\label{nonlinear2}
	\begin{cases}
	\frac{\partial \overline{u}}{\partial a}(a, \xi)\ge (J\ast_\xi \overline{u}-\overline{u})(a, \xi)+\int_0^{a^+}K(a, a')\pi(a')\overline u(a', \xi)da'\, (1-\overline u(a, \xi)),&(a, \xi)\in [0, a^+]\times\R,\\
	\overline{u}(0, \xi)\ge \int_0^{a^+}\gamma(a)\overline{u}(a, \xi+ca)da, &\xi\in\R.
	\end{cases}
	\end{equation}
	Similarly, we define a sub-solution by interchanging the inequalities. Here, \eqref{exp} is used to guarantee the convolution $J\ast \overline u$ to be well-defined.
\end{definition}
\begin{proposition}\label{sub-super method}
	Assume that there exists a pair of sub/super-solution of \eqref{nonlinear} such that $0\le\underline{u}\le\overline{u}\le 1$, then there exist a minimal solution $u_*$ and a maximal solution $u^*$ of \eqref{nonlinear}, in the sense that for any other solution $u\in \{v\in W^{1, 1}((0, a^+), L^1_{loc}(\R)): \underline{u}\le v\le \overline{u}\}$, it holds that
	$$
	\underline{u}\le u_*\le u\le u^*\le \overline{u}.
	$$
\end{proposition}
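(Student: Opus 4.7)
The plan is to construct $u_*$ and $u^*$ via a monotone iteration scheme, in the spirit of Section~\ref{well}, exploiting the fact that the relevant nonlinearity becomes order-preserving once a suitable linear term is added. Fix
$$L \ge \sup_{a\in[0,a^+]}\int_0^{a^+}K(a,a')\pi(a')\,da',$$
and define the map
$$
\mathcal N(v)(a,\xi) := \int_0^{a^+}K(a,a')\pi(a')v(a',\xi)da'\,\bigl(1-v(a,\xi)\bigr) + Lv(a,\xi).
$$
A short computation based on the decomposition
$$
\mathcal N(v_2)-\mathcal N(v_1) = \int_0^{a^+}K(a,a')\pi(a')(v_2-v_1)(a',\xi)\,da'\,(1-v_2(a,\xi)) + \Bigl(L-\int K\pi v_1\,da'\Bigr)(v_2-v_1)(a,\xi)
$$
shows that $\mathcal N$ is non-decreasing on the order interval $\{0\le v\le 1\}$, the factor $1-v_2\ge 0$ being essential.

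Next, I define the iteration operator $\mathcal T$: for $v\in L^\infty([0,a^+]\times\R)$ with $0\le v\le 1$, let $u=\mathcal T v$ be the (unique) solution of the linear problem
\begin{equation*}
\frac{\partial u}{\partial a}(a,\xi)+(1+L)u(a,\xi) = (J\ast_\xi v)(a,\xi) + \mathcal N(v)(a,\xi),\qquad u(0,\xi) = \int_0^{a^+}\gamma(a)v(a,\xi+ca)\,da,
\end{equation*}
obtained explicitly by integrating the first-order ODE in $a$ with integrating factor $e^{(1+L)a}$. Three properties of $\mathcal T$ drive the whole argument: (a) \textbf{Preservation of $[0,1]$.} Direct substitution shows $\mathcal T 0=0$ and $\mathcal T 1=1$ (using $\int_\R J=1$ and $\int_0^{a^+}\gamma=1$); combined with monotonicity below, this gives $\mathcal T([0,1])\subset[0,1]$. (b) \textbf{Monotonicity.} If $v_1\le v_2$ in $[0,1]$, subtracting the defining equations for $\mathcal T v_1$ and $\mathcal T v_2$ and using the positivity of $J$ and $\gamma$ together with the monotonicity of $\mathcal N$ yields $\mathcal T v_1\le \mathcal T v_2$, via a pointwise comparison on the linear ODE in~$a$. (c) \textbf{Consistency with sub/super-solutions.} Adding $L\underline u$ (resp.\ $L\overline u$) on both sides of the sub- (resp.\ super-) solution inequality and comparing with $\mathcal T\underline u$ (resp.\ $\mathcal T\overline u$) gives $\mathcal T\underline u\ge\underline u$ and $\mathcal T\overline u\le\overline u$, again via integrating factor on the age variable.

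Setting $\overline u_n=\mathcal T^n\overline u$ and $\underline u_n=\mathcal T^n\underline u$, properties (b)--(c) imply by induction the chain
$$
\underline u\le\underline u_n\le\underline u_{n+1}\le\overline u_{n+1}\le\overline u_n\le\overline u.
$$
By the monotone convergence theorem the sequences converge pointwise to limits $u_*$ and $u^*$ with $\underline u\le u_*\le u^*\le \overline u$. Since all iterates are bounded by $1$, dominated convergence allows passing to the limit in the convolution $J\ast_\xi v$, in the age integral $\int K\pi v\,da'$, and in the shifted boundary term $\int\gamma(a)v(a,\xi+ca)\,da$. Hence $u_*$ and $u^*$ satisfy the fixed-point identity $\mathcal T u=u$, which is equivalent to the integral formulation of \eqref{nonlinear}; differentiating the explicit integral representation furnishes the regularity $u_*,u^*\in W^{1,1}((0,a^+),L^1_{loc}(\R))$. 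For minimality and maximality, given any solution $u$ with $\underline u\le u\le\overline u$, the identity $\mathcal T u=u$ and monotonicity of $\mathcal T$ give $\underline u_n\le u\le\overline u_n$ for every $n$, so passing to the limit yields $u_*\le u\le u^*$.

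The main technical points I anticipate are (i) the precise calibration of $L$ and the verification that $\mathcal N$ is monotone on $[0,1]$, where the constraint $\overline u\le 1$ is used in a non-trivial way; and (ii) the justification of the limit passage in the nonlocal ingredients (convolution in $\xi$, shifted boundary integral in the characteristic variable). The lack of compactness of the solution map in $\xi$, which blocks a direct application of Schauder's theorem, is circumvented because the argument relies purely on monotone pointwise convergence; this is exactly the advantage mentioned by the authors after Theorem \ref{main2}.
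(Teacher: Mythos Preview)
Your proof is correct and follows the same overall monotone-iteration strategy as the paper, but with a noteworthy technical simplification in how the iteration operator is set up. The paper defines its iterates $u_n$ (resp.\ $u^n$) by keeping the full nonlocal diffusion operator $J\ast_\xi u_n - u_n - Mu_n$ on the implicit (new-iterate) side, so that each step requires solving a linear nonlocal evolution problem in $a$ and invoking the comparison principle for nonlocal diffusion equations. You instead move \emph{all} nonlocal terms to the source side, leaving only the scalar ODE $\partial_a u + (1+L)u = \text{RHS}(v)$ with an explicit integrating-factor solution; monotonicity of $\mathcal T$ then follows from the pointwise monotonicity of the right-hand side and of the boundary datum, with no appeal to a nonlocal comparison principle. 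This buys a genuinely more elementary argument, and your verification that $\mathcal T1=1$, $\mathcal T0=0$ gives the invariance of $[0,1]$ for free, whereas the paper checks the chain of inequalities \eqref{inequality} directly. The trade-off is minimal here since both schemes converge to the same extremal fixed points; the paper's semi-implicit choice would matter more in situations where one wants additional regularity or uniform estimates inherited from the linear nonlocal semigroup at each step, but for the bare existence statement your fully explicit scheme is cleaner.
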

\begin{proof}
	Define $M>0$ as follows,
	\begin{equation}\label{M}
	M=\max_{a\in[0, a^+]}\int_0^{a^+}K(a, a')\pi(a')da',
	\end{equation}
	and define the sequence of the functions $(u_n)_{n\geq 0}$ as $u_0=\underline{u}$ and for $n\ge1$, for $(a,\xi)\in [0,a^+]\times \R$ by
	\begin{equation}\label{u_n}
	\begin{cases}
	\frac{\partial u_n(a, \xi)}{\partial a}- (J\ast_\xi u_n-u_n-Mu_n)(a, \xi)=\int_0^{a^+}K(a, a')\pi(a')u_{n-1}(a', \xi)da'\, (1-u_{n-1}(a, \xi))+Mu_{n-1}(a, \xi),\\
	u_n(0, \xi)= \int_0^{a^+}\gamma(a)u_{n-1}(a, \xi+ca)da.
	\end{cases}
	\end{equation}
	Next, set $u^0=\overline{u}$ and define $u^n$ with $n\ge1$, for $(a,\xi)\in [0,a^+]\times \R$, by the resolution of the problem
	\begin{equation}\label{u^n}
	\begin{cases}
	\frac{\partial u^n(a, \xi)}{\partial a}- (J\ast_\xi u^n-u^n-Mu^n)(a, \xi)=\int_0^{a^+}K(a, a')\pi(a')u^{n-1}(a', \xi)da'\, (1-u^{n-1}(a, \xi))+Mu^{n-1}(a, \xi),\\
	u^n(0, \xi)= \int_0^{a^+}\gamma(a)u^{n-1}(a, \xi+ca)da.
	\end{cases}
	\end{equation}
	First we show that $u_n\in W^{1, 1}((0, a^+), C(\R))$ is well defined. Note that
	$$
	\int_0^{a^+}K(a, a')\pi(a')u_0(a', \xi)da'\, (1-u_0(a, \xi))=\int_0^{a^+}K(a, a')\pi(a')\underline u(a', \xi)da'\, (1-\underline u(a, \xi)), 
	$$
	so that $u_1$ is well defined by the linear nonlocal diffusion equations. Analogously by induction we get that $u_n$ is well defined. Similarly, the existence of $u^n$ follows the same induction arguments.
	
	We will show that the sequence $u_n$ (respectively $u^n$) is increasing (respectively decreasing) in the sense that
	\begin{equation}\label{inequality}
	\underline{u}\le\cdots\le u_n\le u_{n+1}\le u^{n+1}\le u^n\le\cdots\le \overline{u}.
	\end{equation}
	Indeed, taking $w:=u_1-u_0$, it satisfies
	\begin{equation*}
	\begin{cases}
	\frac{\partial w(a, \xi)}{\partial a}- (J\ast_\xi w+w+Mw)(a, \xi) \ge 0
	,\\
	w(0, \xi)\ge 0.
	\end{cases}
	\end{equation*}
	Using the comparison principle of nonlocal diffusion problems, we conclude that $w\ge0$, i.e. $\underline{u}=u_0\le u_1$. Now assume that $u_{n-1}\le u_n$. Observe that
    \begin{eqnarray}
    	&&\int_0^{a^+}K(a, a')\pi(a')u_n(a', \xi)da'\,(1-u_n(a, \xi))+Mu_n(a, \xi)\nonumber\\
    	&&-\int_0^{a^+}K(a, a')\pi(a')u_{n-1}(a', \xi)da'\,(1-u_{n-1}(a, \xi))-Mu_{n-1}(a, \xi)\nonumber\\
    	&\ge&\int_0^{a^+}K(a, a')\pi(a')u_{n-1}(a', \xi)da'(u_{n-1}-u_n)(a, \xi)\,+M(u_n-u_{n-1})(a, \xi)\nonumber\\
    	&\ge&0.\nonumber
    \end{eqnarray}
	Then $w:=u_{n+1}-u_n$ satisfies
	\begin{equation*}
	\begin{cases}
	\frac{\partial w(a, \xi)}{\partial a}- (J\ast_\xi w+w+Mw)(a, \xi)\ge 0,\\
	w(0, \xi)\ge\int_0^{a^+}\gamma(a)(u_n-u_{n-1})(a, \xi+ca)da\ge0.
	\end{cases}
	\end{equation*}
	Again the comparison principle shows that $u_n\le u_{n+1}$. The rest of the inequalities in \eqref{inequality} can be proved similarly.
	
	Next, for all $(a, \xi)\in(0, a^+)\times\R$, $u_n(a, \xi)$ has a limit as $n\to\infty$ by monotone convergence theorem, denoted by $u_*(a, \xi)$; that is $u_n(a, \xi)\to u_*(a, \xi)$ for all $(0, a^+)\times\R$. Thus, one has that for all $\xi\in\R$,
	\begin{eqnarray}\label{unn}
		\int_{0}^{a^+}\ga(a)u_n(a, \xi+ca)da\xrightarrow{n\to\infty}
		\int_{0}^{a^+}\ga(a)u_*(a, \xi+ca)da.
	\end{eqnarray}
	In addition, one has 
	\begin{eqnarray}
		\varphi_n(a, \xi)&:=&J\ast_\xi u_n(a, \xi)-u_n(a, \xi)+\int_0^{a^+}K(a, a')\pi(a')u_n(a', \xi)da'\, (1-u_n(a, \xi))\nonumber\\
		&\xrightarrow{n\to\infty}& J\ast_\xi u_*(a, \xi)-u_*(a, \xi)+\int_0^{a^+}K(a, a')\pi(a')u_*(a', \xi)da'\, (1-u_*(a, \xi))\nonumber\\
		&:=&\varphi(a, \xi),\nonumber
	\end{eqnarray}
    in $(0, a^+)\times\R$ and in $L^1((0, a^+), L^1_{loc}(\R))$. Hence, for all $\xi\in\R$ and $(\eta, \theta)\subset(0, a^+)$, one has
	$$
	u_n(\theta, \xi)-u_n(\eta, \xi)=\int_\eta^\theta \varphi_n(a, \xi)da, \;\forall n\ge0,
	$$
	which implies by letting $n\to\infty$ that
	$$	
	u_*(\theta, \xi)-u_*(\eta, \xi)=\int_\eta^\theta \varphi(a, \xi)da.
	$$
	Since $\varphi\in L^1((0, a^+), L^1_{loc}(\R))$, it follows that $u_*\in W^{1, 1}((0, a^+), L^1_{loc}(\R))$ satisfies the first equation of \eqref{nonlinear} with $\partial_au_*=\varphi$ a.e. in $(0, a^+)\times\R$. Further, $u_*(\cdot, \xi)$ is continuous in $[0, a^+]$ for all $\xi\in\R$. Thus, one has $u_n(0, \xi)\to u_*(0, \xi)$ as $n\to\infty$ in $\R$, which by \eqref{unn} implies that
	$$
	u_*(0, \xi)=\int_{0}^{a^+}\ga(a)u_*(a, \xi+ca)da, \text{ a.e. }\xi\in\R.
	$$
	It follows that $u_*\in W^{1, 1}((0, a^+), L^1_{loc}(\R))$ satisfies the equation \eqref{nonlinear}. 
	
	We now claim that $u_*$ is the minimal solution of \eqref{nonlinear}. Indeed, if $u$ is a solution of \eqref{nonlinear} such that $u\in[\underline{u}, \overline{u}]$, it can be shown that the sequence $u_n$ built in \eqref {u_n} satisfies that $\underline{u}\le u_n\le u$. Hence, $u_n\uparrow u_*\le u$. We can argue similarly with the sequence $u^n$ and conclude $u^*$ is maximal. This completes the proof.
\end{proof}

%{\color{blue}$$
%\1_{\xi\le Y}u_1(a,\xi), \text{ where }U(a, \xi)\ge\ep, \forall \xi\le Y.
%$$
%Define 
%$$
%\underline{u}(a, \xi)=\max\{\underline{w}(a, \xi), \1_{\xi\le Y+ca}w_1(a, \xi)\}\text{ and }\overline{u}(a, \xi)=\min\{1, \overline{w}(a, \xi)\}.
%$$
%This is about the construction of sub-solutions, is it possible to construct sub-solutions as above?
%}
Now define 
\begin{equation}\label{super}
\underline{u}(a, \xi)=\max\{\underline{w}(a, \xi),\, 0\}\text{ and }\overline{u}(a, \xi)=\min\{1,\, \overline{w}(a, \xi)\}.
\end{equation}
By the constructions proposed in Section \ref{css}, $\underline u$ and $\overline u$ are indeed the sub- and super-solutions of \eqref{nonlinear} respectively. Next, due to $u^0=\overline u\le1$ and recalling $M$ defined in \eqref{M}, 
%one has
%\begin{eqnarray}
%	&&\int_0^{a^+}K(a, a')\pi(a')u(a', \xi+ca')da'\,(1-u)-\int_0^{a^+}K(a, a')\pi(a')v(a', \xi+ca')da'\,(1-v(a, \xi))+M(u-v)(a, \xi)\nonumber\\
%	&\ge&\int_0^{a^+}K(\cdot, a')\pi(a')v(a')da'(v-u)\,+M(u-v)\nonumber\\
%	&\ge&0.\nonumber
%\end{eqnarray}
Proposition \ref{sub-super method} applies to the system \eqref{w-equation} and one concludes to the existence of a maximal solution $w\in W^{1, 1}((0, a^+), L^1_{loc}(\R))$ satisfying
$$
\max\{\underline{w}(a, \xi),\, 0\}\le w(a, \xi)\le\min\{1,\, \overline{w}(a, \xi)\}.
$$
It follows that $w(a, \xi)\to0$ as $\xi\to\infty$ uniformly in $a\in[0, a^+]$. Hence to complete the proof of Theorem \ref{main1}, it remains to study the behavior of $w$ as $\xi\to-\infty$. Note that the function $\xi\to\overline w(a, \xi)$ is nonincreasing on $\R$, then the limit function $\xi\to w(a, \xi)$ is also nonincreasing on $\R$ for a.e. $a\in(0, a^+)$. 

On the other hand, recalling the definition of $\underline w$ in \eqref{sub2}, there exists $\xi_0$ large enough such that 
$$
\inf_{a\in(0, a^+)}\underline w(a, \xi_0)>0.
$$ 
Hence due to the monotonicity of $w$ with respect to $\xi$, it follows that 
$$
\lim_{\xi\to-\infty}\inf_{a\in(0, a^+)}w(a, \xi)>0.
$$
Moreover the monotonicity of $\xi\mapsto w(a, \xi)$, for any $a\in[0, a^+]$, ensures that the limit $w^-(a):=\lim\limits_{\xi\to-\infty}w(a, \xi)$ does exist for any $a\in [0,a^+]$. Hence, since $0\leq w(a,\xi)\leq 1$ for all $(a, \xi)\in[0, a^+]\times\R$, the Lebesgue convergence theorem yields
$$
\lim\limits_{\xi\to-\infty}\int_\R J(y)[w(a, \xi-y)-w(a, \xi)]dy=\int_\R J(y)[w^-(a)-w^-(a)]dy=0,\; \forall a\in [0,a^+].
$$ 
As a consequence $w^-$ satisfies \eqref{equi2}. Now recall that \eqref{equi2} has a unique positive solution, the constant function $a\mapsto 1$, which is shown in Section \ref{LB}, thus we have
$$
\lim\limits_{\xi\to-\infty}w(a, \xi)=1, \text{ for all $a\in [0, a^+]$},
$$
the convergence being in fact uniform for $a\in [0,a^+]$ since $\partial_a w(a,\xi)$ is globally bounded.

\subsection{Regularity}
At last, we show the regularity of $w$ with respect to $\xi$. To this aim, let us fix $\la>0$ as the smallest solution of \eqref{s_0}.
%we first provide a lemma.
%\begin{lemma}\label{Gronwall}
%	Let $q:[c, d]\to\R$ be a continuous function which satisfies the following relation:
%	\begin{equation}\label{Psi}
%	\frac{1}{2}q^2(t)\le\frac12q_0^2+\int_c^t\Psi(s)|q(s)|ds, \; t\in[c, d],
%	\end{equation}
%	where $q_0\in \R$ and $\Psi$ is nonnegative continuous in $[c, d]$. Then
%	$$
%	|q(t)|\le |q_0|+\int_c^t\Psi(s)ds, \; t\in[c, d]
%	$$
%	holds.
%\end{lemma}
%\begin{proof}
%	Define $y_\ep$ as follows:
%	$$
%	y_\ep(t):=\frac{1}{2}(q_0^2+\ep^2)+\int_c^t\Psi(s)|q(s)|ds,
%	$$
%	where $\ep>0$. By the relation \eqref{Psi}, we have $q^2(t)\le 2y_\ep(t), t\in[c, d]$ which implies that $|q(t)|\le\sqrt{2y_\ep(t)}, t\in[c, d]$. Since $y'_\ep(t)=\Psi(t)|q(t)|, t\in[c, d]$, we obtain
%	$$ 
%	y'_\ep(t)\le\Psi(t)\sqrt{2y_\ep(t)}.
%	$$
%	Integrating from $c$ to $t$, we can deduce that
%	$$
%	\sqrt{2y_\ep(t)}\le \sqrt{2y_\ep(c)}+\int_c^t\Psi(s)ds.
%	$$
%	It follows that
%	$$
%	|q(t)|\le |q_0|+\ep+\int_c^t\Psi(s)ds,
%	$$
%	for every $\ep>0$, which implies our desired result and the lemma is thus complete.
%\end{proof}
We now summarize some Lipschitz regularity estimates in the following lemma, motivated by Shen and Shen \cite{shen2015transition} and Ducrot and Jin \cite{ducrot2021generalized}. 
\begin{lemma}\label{Lip}
	There exists some constant $m\ge\la$ large enough such that for all $n\ge1$, one has
	$$
	|u^n(a, \xi+h)-u^n(a, \xi)|\le e^{m|h|}-1, \;\forall (a, \xi)\in[0, a^+]\times\R.
	$$
\end{lemma}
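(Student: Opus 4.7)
The plan is to proceed by induction on $n$. For the base case $n=0$, I exploit the explicit form $u^0(a, \xi) = \overline u(a, \xi) = \min\{1, e^{-\lambda(\xi-ca)}\phi(a)\}$: a straightforward case analysis depending on whether $\overline u$ equals $1$ at $\xi$ and/or $\xi + h$ yields $|u^0(a, \xi+h)-u^0(a, \xi)| \leq e^{\lambda|h|}-1$, so the estimate holds at $n=0$ with any $m \geq \lambda$.

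For the inductive step, I first observe that $u^n(a, \cdot)$ is nonincreasing in $\xi$, a property preserved through the iteration \eqref{u^n} by a parallel induction, since the map $(v, u) \mapsto \int K\pi v\,da'\,(1-u) + Mu$ is monotone nondecreasing in each argument for the choice of $M$ in \eqref{M}. Given the inductive hypothesis $|u^{n-1}(a, \xi+h)-u^{n-1}(a, \xi)| \leq e^{m|h|}-1$, I compare $u^n(a, \xi+h)$ with the auxiliary functions
\[
\tilde U^\pm(a, \xi) := u^n(a, \xi) \pm \sigma(a)(e^{m|h|}-1),
\]
for a nonnegative profile $\sigma \in C^1([0, a^+])$ with $\sigma(0) \geq 1$ to be chosen. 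Using the translation invariance of $J\ast_\xi$ and equation \eqref{u^n}, a direct computation yields
\[
\partial_a \tilde U^\pm - \left(J\ast_\xi \tilde U^\pm - \tilde U^\pm - M\tilde U^\pm\right) = g(u^{n-1})(a, \xi) \pm (\sigma'(a)+M\sigma(a))(e^{m|h|}-1),
\]
where $g(u)(a, \xi) := \int_0^{a^+} K(a, a')\pi(a')u(a', \xi)\,da'\,(1-u(a, \xi)) + Mu(a, \xi)$. The super-solution inequality for $\tilde U^+$ and the sub-solution one for $\tilde U^-$, both relative to the equation satisfied by $u^n(\cdot, \cdot+h)$, then amount to
\[
(\sigma'(a)+M\sigma(a))(e^{m|h|}-1) \geq |g(u^{n-1})(a, \xi+h)-g(u^{n-1})(a, \xi)|,
\]
while the associated boundary condition, using $\int \gamma = 1$ and the inductive hypothesis, reduces to $\sigma(0) \geq 1$.

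The core estimate is the control of the source: decomposing $\Delta g = \Delta I \cdot (1-u^{n-1}(\xi)) + (M-I(\xi+h))\Delta u^{n-1}$, where $I(u) := \int K\pi u\,da'$, and using $u^{n-1} \in [0, 1]$, $I(u) \leq M$ from \eqref{M}, and the inductive bound, one obtains $|\Delta g| \leq 2M(e^{m|h|}-1)$. Choosing $\sigma$ as the solution of the ODE $\sigma'+M\sigma = 2M$ with $\sigma(0)=1$, giving $\sigma(a) = 2-e^{-Ma}$, and applying the comparison principle (Lemma \ref{wcp}) produces $|u^n(a, \xi+h)-u^n(a, \xi)| \leq (2-e^{-Ma^+})(e^{m|h|}-1)$. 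The main obstacle is then closing the induction with the same exponent $m$: since the multiplicative constant $A := 2-e^{-Ma^+}$ strictly exceeds $1$, a naive iteration in $n$ would produce geometric growth $A^n$. The resolution consists in taking $m$ large enough and combining the above Lipschitz-type estimate with the trivial bound $|u^n(a, \xi+h)-u^n(a, \xi)| \leq 1$, leveraging the elementary inequality $C(e^x-1) \leq e^{Cx}-1$ (valid for $C\geq 1$ and $x\geq 0$), so that the multiplicative factor can be absorbed into the exponent of the bound once $m$ is chosen sufficiently large.
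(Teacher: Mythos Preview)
Your additive-barrier approach is set up correctly and the computation up to
\[
|u^n(a,\xi+h)-u^n(a,\xi)|\le A\,(e^{m|h|}-1),\qquad A:=2-e^{-Ma^+}>1,
\]
is fine. The gap is in the closing step. You want a fixed $m$ such that the induction hypothesis $|u^{n-1}(\cdot)|\le e^{m|h|}-1$ reproduces itself at level $n$. From your step you only get the bound above together with the trivial bound $\le 1$, i.e.\ $\min\{1,\,A(e^{m|h|}-1)\}$. For $|h|\ge (\ln 2)/m$ the trivial bound is enough, but for $|h|<(\ln 2)/m$ you would need $A(e^{m|h|}-1)\le e^{m|h|}-1$, which forces $A\le 1$, contradicting $A>1$. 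The inequality $C(e^x-1)\le e^{Cx}-1$ does not help either: it only converts your bound into $e^{Am|h|}-1$, so the exponent is multiplied by $A$ at every step and after $n$ iterations you obtain $e^{A^n m_0|h|}-1$, not a uniform-in-$n$ estimate. No choice of $m$, however large, removes this obstruction, because the failure occurs at the level of the ratio $A$ of the two sides as $|h|\to 0$, which is independent of $m$.

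The paper avoids this loss by replacing the additive difference by a multiplicative comparison. Using the monotonicity of $\xi\mapsto u^n(a,\xi)$, it suffices (for $h>0$) to propagate the ratio bound $u^n(a,\xi+h)\ge e^{-mh}u^n(a,\xi)$. One sets $v_{n,h}(a,\xi):=e^{mh}u^n(a,\xi+h)$ and checks, after enlarging $M$ so that $g(u)=\int K\pi u\,(1-u)+Mu$ is monotone on $[0,e^{mh}]$, that $v_{n,h}$ is a super-solution of the very equation \eqref{u^n} satisfied by $u^{n+1}$, with larger boundary datum. The comparison principle then yields $u^{n+1}\le v_{n+1,h}$ directly, with \emph{no} multiplicative loss, and the desired estimate follows from $u^{n}(a,\xi)-u^{n}(a,\xi+h)\le (1-e^{-mh})v_{n,h}\le e^{mh}-1$. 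The essential point is that the scaling $u\mapsto e^{mh}u$ commutes exactly with the linear part of \eqref{u^n}, so the induction closes with the same $m$ at every step; your additive shift $u\mapsto u+\sigma(a)(e^{m|h|}-1)$ does not enjoy this property for the nonlinear source $g$, which is why a constant $A>1$ is unavoidable in your scheme.
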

\begin{proof}
	In the following we prove it for $h>0$. The case for $h<0$ can be proved similarly.
	
    Recall that $u^0(a, \xi)=\overline{u}(a, \xi)$ defined in \eqref{super} is given as follows:
    $$
    u^0(a, \xi)=\begin{cases}
	e^{-\la(\xi-ca)}\phi(a), &\text{ if }\xi\ge ca+\frac{1}{\la}\ln\phi(a),\\
	1, &\text{ if }\xi<ca+\frac{1}{\la}\ln\phi(a),
	\end{cases}
$$
while for $h>0$, one has
$$
u^0(a, \xi+h)=\begin{cases}
	e^{-\la(\xi+h-ca)}\phi(a), &\text{ if }\xi+h\ge ca+\frac{1}{\la}\ln\phi(a),\\
	1, &\text{ if }\xi+h<ca+\frac{1}{\la}\ln\phi(a).
\end{cases}
$$
Then we infer from these formulas,
\begin{eqnarray}
	&&\frac{u^0(a, \xi+h)}{u^0(a, \xi)}\nonumber\\
	&\!\!=\!\!\!\!&\!\!\begin{cases}
		e^{-\la h}, \!\!\!\!&\text{ if }\xi\ge ca+\frac{1}{\la}\ln\phi(a),\\
		e^{-\la(\xi+h-ca)}\phi(a), \!\!\!\!&\text{ if }ca+\frac{1}{\la}\ln\phi(a)-h\le \xi< ca+\frac{1}{\la}\ln\phi(a),\\
		1, \!\!\!\!&\text{ if }\xi+h<ca+\frac{1}{\la}\ln\phi(a).
	\end{cases}\nonumber
\end{eqnarray}
Hence one can choose $m>\la$ large enough such that
$$
e^{-m h}\le\frac{u^0(a, \xi+h)}{u^0(a, \xi)}\le1, \;\forall (a, \xi)\in[0, a^+]\times\R.
$$
Now consider the functions $v_{n, h}=v_{n, h}(a, \xi), n\ge0$ given by
\begin{equation}\label{v_n}
v_{n, h}(a, \xi):=\frac{u^n(a, \xi+h)}{e^{-m h}}.
\end{equation}
Next, we choose $M>0$ a little bit different from \eqref{M}, since now $v_{n, h}$ could be larger than $1$. Observe that $v_{n, h}\le e^{mh}$ for all $n\ge0$, so set 
$$
M:=e^{mh}\,\max_{a\in[0, a^+]}\int_0^{a^+}K(a, a')\pi(a')da'.
$$
Direct computations for any $0\le v\le 1$ and $v\le u\le e^{mh}$ yields
\begin{eqnarray}
	&&\int_0^{a^+}K(a, a')\pi(a')u(a', \xi)da'\,(1-u(a, \xi))+Mu(a, \xi)\nonumber\\
	&&-\int_0^{a^+}K(a, a')\pi(a')v(a', \xi)da'\,(1-v(a, \xi))-Mv(a, \xi)\nonumber\\
	&=&\int_0^{a^+}K(a, a')\pi(a')(u(a', \xi)-v(a', \xi))da'(1-v(a, \xi))\nonumber\\
	&&-\int_0^{a^+}K(a, a')\pi(a')(u(a', \xi)-v(a', \xi))da'(u(a, \xi)-v(a, \xi))\nonumber\\
	&&-\int_0^{a^+}K(a, a')\pi(a')v(a', \xi)da'(u(a, \xi)-v(a, \xi))+M(u-v)(a, \xi)\nonumber\\
	&\ge&\left(M-\int_0^{a^+}K(a, a')\pi(a')u(a', \xi)da'\right)(u-v)(a, \xi)\nonumber\\
	&\ge&0.\label{g}
\end{eqnarray}
First let us consider $n=1$. Observe that $v_{1, h}$ satisfies the following equation,
\begin{equation*}
	\begin{cases}
		\frac{\partial v_{1, h}}{\partial a}=J\ast_\xi v_{1, h}-v_{1, h}-Mv_{1, h}+Mv_{0, h}+\int_0^{a^+}K(a, a')\pi(a')v_{0, h}(a', \xi)da'\,(1-u^{0}(a, \xi+h)),\\
		v_{1, h}(0, \xi)=\int_0^{a^+}\ga(a)v_{0, h}(a, \xi+ca)da.
	\end{cases}
\end{equation*}
It follows from $v_{0, h}(a, \xi)\ge u^0(a, \xi+h)$ that
\begin{equation*}
	\begin{cases}
		\frac{\partial v_{1, h}}{\partial a}\ge J\ast_\xi v_{1, h}-v_{1, h}-Mv_{1, h}+Mv_{0, h}+\int_0^{a^+}K(a, a')\pi(a')v_{0, h}(a', \xi)da'\,(1-v_{0, h}),\\
		v_{1, h}(0, \xi)=\int_0^{a^+}\ga(a)v_{0, h}(a, \xi+ca)da.
	\end{cases}
\end{equation*}
On the other hand, recall that $u^1$ satisfies \eqref{u^n}, thus due to the monotonicity \eqref{g} and $v_{0, h}\ge u^0$, we have 
\begin{eqnarray}
	&&Mv_{0, h}(a, \xi)+\int_0^{a^+}K(a, a')\pi(a')v_{0, h}(a', \xi)da'\,(1-v_{0, h}(a, \xi))\nonumber\\
	&\ge& Mu^{0}(a, \xi)+\int_0^{a^+}K(a, a')\pi(a')u^{0}(a', \xi)da'\, (1-u^{0}(a, \xi)),\nonumber
\end{eqnarray}
and the initial data satisfies, 
\begin{eqnarray}
v_{1, h}(0, \xi)&=&\int_0^{a^+}\ga(a)v_{0, h}(a, \xi+ca)da=e^{m h}\int_0^{a^+}\ga(a)u^0(a, \xi+ca+h)da\nonumber\\
&\ge& \int_0^{a^+}\ga(a)u^0(a, \xi+ca)da=u^1(0, \xi), \;\forall\xi\in\R.\nonumber
\end{eqnarray}
The comparison principle applies and provides
\begin{equation}\label{ge}
u^1(a, \xi)\le v_{1, h}(a, \xi), \;\forall (a, \xi)\in[0, a^+]\times\R.
\end{equation}
Now since for all $a\in[0, a^+]$ the function $\xi\to u^1(a, \xi)$ is nonincreasing, for all $a\in[0, a^+]$ and $\xi\in\R$, we get
\begin{eqnarray}\label{u0}
	|u^1(a, \xi+h)-u^1(a, \xi)|\le u^1(a, \xi)-u^1(a, \xi+h)&\le& (1-e^{-m h})v_{1, h}(
	a, \xi)\nonumber\\
	&\le& e^{m h}-1.
\end{eqnarray}
Next, let us prove the result \eqref{ge} for any $n>1$ by induction. Assume that $v_{n, h}(a, \xi)\ge u^n(a, \xi)$ for any $(a, \xi)\in[0, a^+]\times\R$. Observe that $v_{n+1, h}$ satisfies the following equation,
\begin{equation*}
	\begin{cases}
		\frac{\partial v_{n+1, h}}{\partial a}=J\ast_\xi v_{n+1, h}-v_{n+1, h}-Mv_{n+1, h}+Mv_{n, h}+\int_0^{a^+}K(a, a')\pi(a')v_{n, h}(a', \xi)da'\,(1-u^{n}(a, \xi+h)),\\
		v_{n+1, h}(0, \xi)=\int_0^{a^+}\ga(a)v_{n, h}(a, \xi+ca)da.
	\end{cases}
\end{equation*}
It follows from $v_{n, h}(a, \xi)\ge u^n(a, \xi+h)$ by \eqref{v_n} that
\begin{equation*}
	\begin{cases}
		\frac{\partial v_{n+1, h}}{\partial a}\ge J\ast_\xi v_{n+1, h}-v_{n+1, h}-Mv_{n+1, h}+Mv_{n, h}+\int_0^{a^+}K(\cdot, a')\pi(a')v_{n, h}(a', \xi)da'\,(1-v_{n, h}),\\
		v_{n+1, h}(0, \xi)=\int_0^{a^+}\ga(a)v_{n, h}(a, \xi+ca)da.
	\end{cases}
\end{equation*}
On the other hand, recalling that $u^{n+1}$ satisfies \eqref{u^n} and due to the monotonicity \eqref{g} again along with $v_{n, h}\ge u^n$, we have 
\begin{eqnarray}
&&Mv_{n, h}(a, \xi)+\int_0^{a^+}K(a, a')\pi(a')v_{n, h}(a', \xi)da'\,(1-v_{n, h}(a, 
\xi))\nonumber\\
&\ge& Mu^{n}(a, \xi)+\int_0^{a^+}K(a, a')\pi(a')u^{n}(a', \xi)da'\,(1-u^{n}(a, \xi)).\nonumber
\end{eqnarray}
As the initial data satisfies, 
\begin{eqnarray}
	v_{n+1, h}(0, \xi)&=&\int_0^{a^+}\ga(a)v_{n, h}(a, \xi+ca)da\nonumber\\
	&\ge& \int_0^{a^+}\ga(a)u^n(a, \xi+ca)da=u^{n+1}(0, \xi), \;\forall\xi\in\R,\nonumber
\end{eqnarray}
the comparison principle applies and provides
$$ 
u^{n+1}(a, \xi)\le v_{n+1, h}(a, \xi), \;\forall (a, \xi)\in[0, a^+]\times\R.
$$
Now since for all $a\in[0, a^+]$ the function $\xi\to u^{n+1}(a, \xi)$ is nonincreasing, for all $a\in[0, a^+]$ and $\xi\in\R$, we get
\begin{eqnarray}
	|u^{n+1}(a, \xi+h)-u^{n+1}(a, \xi)|\le u^{n+1}(a, \xi)-u^{n+1}(a, \xi+h)&\le& (1-e^{-m h})v_{n+1, h}(
	a, \xi)\nonumber\\
	&\le& e^{m h}-1.
\end{eqnarray}
Hence, we have obtained that, for all $n\ge1$ and for all $h>0$, 
$$
|u^n(a, \xi+h)-u^n(a, \xi)|\le\min\{1, e^{m|h|}-1\}, \, \forall (a, \xi)\in[0, a^+]\times\R.
$$
As mentioned before, the case $h<0$ can be handled similarly and thus the result is desired.
\end{proof} 

Now the estimates provided in Lemma \ref{Lip} allows us to conclude that the limit $w$ obtained in Section \ref{existence} is globally Lipschitz continuous. This complete the desired regularity of $w$ with respect to $\xi$ as needed in Definition \ref{TW}. 
Thus we have obtained a continuous and bounded solution $w$ satisfying \eqref{w-equation}, which is a traveling wave solution of \eqref{u-equation}. This completes the proof of Theorem \ref{main1} for $c>c^*$.

Next, let us prove the existence of traveling waves for the critical wave speed $c=c^*$. To this aim, assume that $\{c_l\}_{l\in\N}\in(c^*, c^*+1)$ is a decreasing sequence satisfying $\lim\limits_{l\to\infty}c_l=c^*$. Following the argument after Proposition \ref{sub-super method}, for each $c_l$ there exists a traveling wave solution satisfying \eqref{w-equation}, denoted by $w_l$. Since $w_l(a, \cdot+\eta)$ is also a solution of \eqref{w-equation} for any $\eta\in\R$, we can assume that $w_l(0, 0)=\frac{1}{2}$. Recalling Lemma \ref{speed}, for any $c_l$, there exists $\la_l$ satisfying \eqref{s_0}. Observe that $c_l\to\la_l=\la_l(c_l)$ is continuous due to \eqref{s_0}. It follows that $\{\la_l\}_{l\in\N}$ is bounded due to $\{c_l\}_{l\in\N}\in(c^*, c^*+1)$ and thus we can choose $m>0$ large enough such that $m>\{\la_l\}_{l\in\N}$. Applying the argument in Lemma \ref{Lip} to $\{w_l\}_{l\in\N}$, we obtain that $\{w_l\}_{l\in\N}$ is equi-continuous with respect to $\xi$. Due to $\{w_l\}_{l\in\N}\in[0, 1]$, one see from the equation that $\{w_l\}_{l\in\N}$ is equi-continuous with respect to $a$. Now by Arzel\`a-Ascoli Theorem, there exists a subsequence of $\{w_l\}_{l\in\N}$, again denoted by $\{w_l\}_{l\in\N}$ satisfying $w_l\to w^*$ as $l\to\infty$ and that $\xi\to w^*(a, \xi)$ is globally Lipschitz continuous for any $a\in[0, a^+]$ and that $a\to w^*(a, \xi)\in W^{1, 1}(0, a^+)$ for any $\xi\in\R$. It is clear that $w^*(0, 0)=\frac{1}{2}$. In addition, by the same argument in Proposition \ref{sub-super method}, $w^*$ satisfies the following equation
	\begin{equation}
		\begin{cases}
			\frac{\partial w^*(a, \xi)}{\partial a}=\left(J\ast_\xi w^*-w^*\right)(a, \xi)+\int_0^{a^+}K(a, a')\pi(a')w^*(a', \xi)da'\,(1-w^*(a, \xi)),\\
			w^*(0, \xi)=\int_0^{a^+}\gamma(a)w^*(a, \xi+ca)da.
		\end{cases}
	\end{equation}
	
On the other hand, $0\le w^*\le 1$ and $\xi\to w^*(a, \xi)$ being nonincreasing for any $a\in[0, a^+]$ imply that the limits $w^*(a, \pm\infty)=w^\pm(a)$ exist for any $a\in[0, a^+]$. Moreover, the limits satisfy the equation \eqref{equi2}, together with the condition 
$$
w^-(0)\geq \frac{1}{2}\geq w^+(0).
$$
Hence Lemma \ref{LE-lim} applies and ensures that
$w^*(a, -\infty)=1$ and $w^*(a, \infty)=0$ uniformly in $[0, a^+]$. Hence the existence for $c=c^*$ is complete.

Now we set $I=\pi(a)u$ and then obtain that it is a traveling wave solution of \eqref{I-equation} with speed $c$. Finally, let us notice that $\pi(a^+)=0$ and $u$ being bounded implies that function $I$ vanishes at $a=a^+$. Hence Theorem \ref{main2} is complete.

\section{Spreading speeds}\label{S}

In this section, we shall prove Theorem \ref{SS}. 
\subsection{Outer spreading}
For Theorem \ref{SS}-(i), let $c>c^*$ be given and fixed. We now look for a super-solution of \eqref{IVP-u} with the form $v(t, a, x)=v_0e^{-\lambda(x-ct)}\phi(a)$ for some positive constant $v_0$ to be chosen later and $\lambda>0$ satisfying \eqref{s_0}. Here $\phi$ is defined in \eqref{phi_2} with $s=s_0$, where $s_0$ is defined in Section \ref{css}. Note that, for all $t\in\R, a\in(0, a^+)$ and $x\in\R$ one has
\begin{eqnarray}
&&\frac{\partial v}{\partial t}+\frac{\partial v}{\partial a}-v_0\phi(a)\left[\int_\R J(x-y)e^{-\lambda(y-ct)}dy-e^{-\lambda(x-ct)}\right]-\int_0^{a^+}K(a, a')\pi(a')v(a')da'\nonumber\\
&=&c\lambda v+\phi'(a)\frac{v}{\phi}-v\left[\int_\R J(x-y)e^{\lambda(x-y)}dy-1\right]-\int_0^{a^+}K(a, a')\pi(a')v(a')da'\nonumber\\
&=&0.\nonumber
\end{eqnarray}
Besides, again by \eqref{phi_2} one has
$$
v(t, 0, x)=v_0e^{-\lambda(x-ct)}\phi(0)=v_0e^{-\lambda(x-ct)}\int_0^{a^+}\gamma(a)\phi(a)da=\int_0^{a^+}\gamma(a)v(t, a, x)da.
$$
On the other hand, we know from \eqref{IVP-u} that
$$
\frac{\partial u}{\partial t}+\frac{\partial u}{\partial a}\le J\ast u-u+\int_0^{a^+}K(a, a')\pi(a')u(a')da'.
$$
Finally, since $u_0$ is compactly supported and $\inf_{a\in[0, a^+]}\phi(a)>0$, we can choose $v_0$ large enough such that
$$
v(0, a, x)=v_0e^{-\lambda x}\phi(a)\ge u_0(a, x), \;\text{ for all } x\in\R \text{ and }a\in(0, a^+)
$$
which implies that $v$ is a super-solution of \eqref{IVP-u}. Hence we end-up with the following estimate due to Lemma \ref{wcp}
$$
u(t, a, x)\le v(t, a, x)=v_0e^{-\lambda(x-ct)}\phi(a), \;\text{ for all } x\in\R,\, t\ge0 \text{ and }a\in(0, a^+).
$$
Now let $c_1$ be any real number such that $c_1>c>c^*$. Then we have
$$
\sup_{|x|\ge c_1t,\, 0<a<a^+}v(t, a, x)=v_0e^{-\lambda(c_1-c)t}\phi(a)\to0, \text{ as }t\to\infty,
$$
therefore as $c>c^*$ can be chosen arbitrary close to $c^*$, Theorem \ref{SS}-(i) holds true in the case $x\ge ct$ and consequently for $|x|\ge ct$ as well. This concludes the proof of Theorem \ref{SS}-(i).

\subsection{Hair-trigger effects}
In order to prove Theorem \ref{SS}-(ii), we first investigate the so-called hair trigger effect of \eqref{IVP-u}, that roughly speaking indicates the ability of the solutions of the system to become uniformly positive whatever the smallness of the non-zero and non-negative initial data.   
\begin{lemma}\label{HTE}
	Let $u=u(t, a, x)$ be the solution of \eqref{IVP-u} with the initial data $u_0=u_0(a, x)$.
	If there exist two constants $x_0\in\R$ and $\rho_0\in(0, 1)$ such that
	$$
	u_0(a, x)\ge\rho_0 \text{ for all $x\in [x_0-1,x_0+1]$ and }a\in(0, a^+),
	$$
	then for any $\rho\in(0, 1)$, there exists $T_{\rho_0}^\rho\ge0$ which is independent of $x_0$ such that
	\begin{equation}\label{hair-trigger}
		u(t, a, x)\ge\rho \text{ for all $x\in [x_0-1,x_0+1]$},\, t\ge T_{\rho_0}^\rho \text{ and }a\in(0, a^+).
	\end{equation}
\end{lemma}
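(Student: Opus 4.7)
The plan is to exploit the translation invariance of the system in $x$, and to obtain uniform lower bounds by comparison with a compactly supported sub-solution whose evolution is controlled via a bounded-domain truncation.

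First, because the coefficients in \eqref{IVP-u} are independent of $x$, the equation is invariant under translations of the spatial variable. Any waiting time $T_{\rho_0}^\rho$ obtained at $x_0=0$ therefore applies to every $x_0 \in \R$, and it is enough to treat the case $x_0=0$. The comparison principle (Lemma \ref{wcp}) also lets us replace the general initial datum by $\underline u_0(a,x) = \rho_0 \1_{[-1,1]}(x)$ and work with the solution $\underline u$ of \eqref{IVP-u} issued from $\underline u_0$; the conclusion of the lemma for $\underline u$ clearly implies it for the original $u$.

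The heart of the argument is to show $\underline u(t,\cdot) \to 1$ uniformly on $[0,a^+] \times [-1,1]$ as $t \to \infty$. For $L>1$, introduce the truncated evolution obtained by replacing the convolution $J\ast v$ with $\int_{-L}^{L} J(\cdot-y)v(y)\,\d y$. Since the truncated convolution is dominated by the full convolution on non-negative functions, its solution $v_L$ started from $\underline u_0$ satisfies $v_L \leq \underline u$ by Lemma \ref{wcp}. The linearization of the truncated problem at $v=0$ is a positive compact Krein-Rutman operator, whose principal eigenvalue $\mu_L$ tends monotonically to a strictly positive limit as $L \to \infty$; this limit reflects the instability of the trivial state and corresponds to the condition $\rho(L_0)>1$ already established in Section \ref{css}. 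Hence $\mu_L>0$ for $L$ large, and a small multiple of the corresponding positive eigenfunction is a sub-solution of the truncated evolution. Combined with the trivial super-solution $v \equiv 1$, a monotone iteration yields convergence of $v_L(t,\cdot)$ as $t \to \infty$ to a stationary solution $V_L$ of the truncated problem which is strictly positive on $[0,a^+]\times(-L,L)$.

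The final step is to show $V_L \to 1$ locally uniformly as $L \to \infty$. Any weak limit $V_\infty$ is a non-negative bounded steady state of the full untruncated problem, and a strong maximum principle argument based on Assumption \ref{Assump}(iii)-(iv) (positivity and symmetry of $J$, positivity of $K$ and $\pi$) forces $V_\infty$ to be independent of $x$; the uniform lower bound coming from the eigenfunction construction passes to the limit, so $V_\infty \not\equiv 0$. Lemma \ref{LE-lim} then identifies $V_\infty \equiv 1$. For the prescribed $\rho \in (0,1)$ one picks $L$ large enough that $V_L \geq (1+\rho)/2$ on $[0,a^+]\times[-1,1]$, then $T_{\rho_0}^\rho$ large enough that $v_L(t,\cdot) \geq \rho$ on that set for every $t \geq T_{\rho_0}^\rho$; by construction $T_{\rho_0}^\rho$ depends only on $\rho_0$ and $\rho$. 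The main obstacle will be the identification $V_\infty \equiv 1$: ruling out $x$-dependent non-trivial steady states of the unbounded problem hinges on a non-local strong maximum principle that leverages the positivity and symmetry of $J$ and the positivity of $K$, $\gamma$, and $\pi$ given by Assumption \ref{Assump}.
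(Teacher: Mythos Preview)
Your approach is genuinely different from the paper's, and considerably more involved. The paper does not use a bounded-domain truncation at all: it simply observes that the solution $w$ of the age-independent scalar Fisher--KPP problem
\[
\partial_t w = J\ast w - w + \Phi_{\min}\,w(1-w),\qquad w(0,x)=\rho_0\,\mathbf 1_{[x_0-1,x_0+1]}(x),
\]
with $\Phi_{\min}=\min_{a}\int_0^{a^+}K(a,a')\pi(a')\,da'>0$, is a sub-solution of \eqref{IVP-u} (because $w\le 1$ and the nonlinear term in \eqref{IVP-u} dominates $\Phi_{\min}w(1-w)$). The hair-trigger effect for this scalar nonlocal KPP equation is then quoted from the literature (Alfaro, Finkelshtein--Tkachov, Xu et al.), and the conclusion follows by Lemma~\ref{wcp}. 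The entire argument fits in a few lines.

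Your route, by contrast, has two real gaps. First, after truncating to $[-L,L]$, the initial datum $\rho_0\mathbf 1_{[-1,1]}$ vanishes on $(-L,L)\setminus[-1,1]$ while the Krein--Rutman eigenfunction is strictly positive throughout $(-L,L)$; before you can invoke monotone iteration you need a strict-positivity step showing $v_L(t_0,\cdot,\cdot)>0$ on all of $[0,a^+]\times(-L,L)$ for some $t_0>0$. For nonlocal diffusion with age structure this is not automatic (and in the paper it is the content of Corollary~\ref{CORO}, proved separately). Second, and more seriously, the step you yourself flag as ``the main obstacle''---that any bounded non-negative steady state of the full problem on $\R$ is independent of $x$---is a Liouville-type statement that does not follow from a one-line strong maximum principle; for nonlocal equations such results are delicate and typically require a sliding argument or the existence of monotone fronts. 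You have not indicated how you would carry this out, nor why $V_L$ converges (monotonicity in $L$ would itself need proof). The paper sidesteps both issues entirely by reducing to a scalar equation for which the hair-trigger property is already known.
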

\begin{proof}
	We consider the following auxiliary problem
	\begin{equation}\label{vvv}
		\begin{cases}
			\frac{\partial w}{\partial t}=J\ast w-w+\Phi_{\min}w(1-w), \\
			w(0, x)=\rho_0\mathbf{1}_{[x_0-1,x_0+1]}(x),
		\end{cases}
	\end{equation}
    where 
    \begin{equation}\label{Phi}
    	\Phi_{\min}:=\min_{a\in[0, a^+]}\int_0^{a^+}K(a, a')\pi(a')da'>0, \text{ due to Assumption \ref{Assump}-(ii) and (iv) }
    \end{equation} 
    and $\mathbf{1}_S$ denotes the indicator function of the set $S$. Then it is easy to see that the solution $w$ of 
    \eqref{vvv} is a sub-solution of \eqref{IVP-u} with $u_0(a, x)\ge\rho_0\mathbf{1}_{B_1(x_0)}(x)$.
    
    However, recall Xu et al. \cite[Lemma 4.1]{xu2021spatial} (see also Alfaro \cite[Theorem 2.6]{alfaro2017fujita} or Finkelschtein and Tkachov \cite[Theorems 2.5 and 2.7]{finkelshtein2018hair}), the equation \eqref{vvv} exhibits the hair trigger effect, that is there exists $T_{\rho_0}^\rho\ge0$ which is independent of $x_0$ such that
    $$
    w(t, x)\ge\rho \text{ for any }x\in [x_0-1,x_0+1],\, t\ge T_{\rho_0}^\rho.
    $$
    Now the comparison principles applies to conclude the desired result \eqref{hair-trigger}.
\end{proof}

\begin{corollary}\label{CORO}
Let $u=u(t, a, x)$ be the solution of \eqref{IVP-u} with non-negative initial data $u_0=u_0(a, x)\in C([0,a^+]\times\R)\setminus\{0\}$.
Then one has for each $t\ge a^+$ and for any $x\in\R$,
$$
\inf_{a\in (0,a^+)} u(t,a,x)>0.
$$
Due to the above property, Lemma \ref{HTE} implies that, if in addition $u_0\leq 1$, then 
$$
\lim_{t\to\infty} \sup_{a\in (0,a^+)}|u(t,a,x)-1|=0,\text{ locally uniformly for $x\in \R$.}
$$

\end{corollary}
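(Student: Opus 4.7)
My plan is to first establish the strict-positivity statement for $t\ge a^+$, and then invoke Lemma~\ref{HTE} to derive the convergence. The key tool is that the nonlocal diffusion semigroup $S_t:=e^{(J\ast - I)t}$ on $C_b(\R)$ is \emph{strongly positivity-improving}: because $J\ge 0$ is continuous with $J(0)>0$ (Assumption~\ref{Assump}(iii)), each iterated convolution $J^{*n}$ is strictly positive on a ball of radius growing with $n$, so $S_t\varphi = e^{-t}\sum_{n\ge 0}(t^n/n!)J^{*n}\ast\varphi$ is strictly positive pointwise on all of $\R$ for every non-zero $\varphi\in C_b(\R)_+$ and every $t>0$. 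Since $0\le u\le 1$ by Lemma~\ref{wcp}, the reaction term $G(a,x;t)(1-u)$ with $G(a,x;t):=\int_0^{a^+}K(a,a')\pi(a')u(t,a',x)\,da'$ is non-negative, so one has the linear lower bound $\partial_t u+\partial_a u\ge J\ast u-u$ along every characteristic.

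Since $u_0\not\equiv 0$ is continuous, I would choose $a_0\in[0,a^+)$, $x_0\in\R$ and $c_0,\delta>0$ with $u_0\ge c_0$ on $[a_0, a_0+\delta]\times [x_0-\delta, x_0+\delta]$. For each $a_1\in[a_0, a_0+\delta]$, the function $w(t,x):=u(t, a_1+t, x)$ satisfies $\partial_t w\ge J\ast w-w$ for $t\in[0, a^+-a_1]$, so $w(t,\cdot)\ge S_t u_0(a_1,\cdot)>0$ on all of $\R$; thus $u(t, a_1+t, x)>0$ for every $x\in\R$ and every such $t>0$. Combined with Assumption~\ref{Assump}(ii), (iv), this forces $G(a,x;t)>0$ for every $(a,x)\in[0,a^+]\times\R$ and every $t\in(0, a^+-a_0]$. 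Next, since $\gamma$ is continuous, non-negative and integrates to $1$, the open set $\{\gamma>0\}$ contains an interval $(\alpha,\beta)$, and the boundary relation combined with the preceding step yields a first time $r^*\in(0, a^+-a_0)$ at which $u(r^*, 0, x)>0$ for every $x\in\R$. Propagating this boundary positivity along the characteristic $\tilde w(s,x):=u(r^*+s, s, x)$ (which again satisfies $\partial_s\tilde w\ge J\ast\tilde w-\tilde w$) gives $u(r^*+s, s, x)>0$ for $s\in[0, a^+]$, and a second round of boundary renewal combined with the strict positivity of $G$ yields $u(t, a, x)>0$ for every $a\in(0, a^+)$ and $x\in\R$ once $t$ is sufficiently large. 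Finally, since the same argument applies verbatim starting from any time $t_1\ge 0$ with the non-trivial data $u(t_1,\cdot,\cdot)$, the conclusion $u(t, a, x)>0$ for every $t\ge a^+$, $a\in(0, a^+)$ and $x\in\R$ follows.

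For the convergence assertion, fix $x_1\in\R$ and $t_0\ge a^+$; by the preceding part and continuity of $u$ in $(a,x)$, the infimum $\rho_0:=\inf_{a\in(0, a^+),\ x\in[x_1-1, x_1+1]} u(t_0, a, x)$ is strictly positive. Applying Lemma~\ref{HTE} to the time-shifted solution $u(\cdot+t_0, \cdot, \cdot)$, for any $\rho\in(0,1)$ one gets $u(t, a, x)\ge\rho$ on $[x_1-1, x_1+1]$ for $t\ge t_0+T^\rho_{\rho_0}$; letting $\rho\to 1^-$ yields the claimed locally uniform convergence. The main obstacle is the renewal step: the age-support of $u_0$ may not overlap the support of $\gamma$ under the direct characteristic flow, and one must combine the strict positivity of $K$ (which turns pointwise age-positivity into a positive $G$ at \emph{every} age) with the strong positivity of $S_t$ (which spreads pointwise positivity across all of $\R$ instantaneously) to force a positive birth rate $u(\cdot, 0, x)$ in finite time and uniformly in $x\in\R$.
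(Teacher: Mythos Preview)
Your approach shares the paper's key tools---integration along characteristics and the strong positivity of $S_t=e^{(J\ast-I)t}$---but diverges in execution. The paper writes the variation-of-constants formula along characteristics, feeds it into the boundary condition to obtain a renewal equation for $u(t,0,\cdot)$, and then argues \emph{backward by contradiction}: assuming $u(t_0,0,x_0)=0$ for some $t_0\ge a^+$, the renewal identity together with the strict positivity of $S_t$ forces $u(t_0-b_0,0,\cdot)\equiv 0$ on all of $\R$ for some $b_0\in(0,a^+]$; iterating finitely many times lands at a time below $a^+$, where the direct contribution of the initial data yields a contradiction. This backward descent neatly circumvents the ``age-support alignment'' obstacle you flag in your final paragraph, and it explains transparently why the threshold $t\ge a^+$ appears.

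Your forward-propagation route has a genuine gap at the renewal step. The claim that ``the boundary relation combined with the preceding step yields a first time $r^*\in(0,a^+-a_0)$ at which $u(r^*,0,x)>0$'' is not justified as written: you have established positivity of $u(t,\cdot,x)$ only on the moving band $[a_0+t,\,a_0+\delta+t]$, which need not intersect $\{\gamma>0\}$ (e.g.\ when $a_0$ lies above that set), and the positivity of $G$ by itself does not give $u>0$ at other ages. What is missing is a further use of variation of constants along an \emph{arbitrary} characteristic: for $w(s)=u(s,a_2+s,\cdot)$ one has $w(s)=S_s u_0(a_2,\cdot)+\int_0^s S_{s-\tau}\bigl[G(\tau)(1-w(\tau))\bigr]\,d\tau$, and since $G(\tau)>0$ for $\tau>0$ and $1-w\ge 0$, the strong positivity of $S_{s-\tau}$ makes the integral strictly positive pointwise whenever $w\not\equiv 1$; this gives $u(t,a,x)>0$ for every $a\in(t,a^+)$, and then the boundary condition and one more characteristic step fill in $a\in(0,t]$. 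With this inserted your scheme closes. A side remark: both your argument and the paper's tacitly need $u\le 1$ (so that $G(1-u)\ge 0$, respectively $\mathcal K(u)\ge 0$), which is only assumed in the second assertion of the corollary; this is a minor slip in the statement rather than in either proof.
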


\begin{proof}
First we define an operator $\mathcal K: L^1((0, a^+), X)\to L^1((0, a^+), X)$ as follows,
\begin{equation*}
	\mathcal K(u):=\int_0^{a^+}K(\cdot, a')\pi(a')u(a')da'\;(1-u), \quad u\in L^1((0, a^+), X).
\end{equation*}
Next solving the problem \eqref{IVP-u} along the characteristic line $a-t=c$, where $c\in\R$, we now derive the formula for a solution to \eqref{original}. For fixed $c\in\R$, we set $w(t)=u(t, t+c)$ for $t\in[\max(-c, 0), \infty)$. With $a=t+c$ one obtains for $t\in[\max(-c, 0), \infty)$ the equation
\begin{equation}\label{www}
	\partial_tw(t)=Tw-w+[\mathcal K(w(t))](t+c),
\end{equation}
where $T$ is defined in \eqref{op-K}. We first study the case $c\ge0$. Clearly, $w(0)=u(0, c)=u(0, a-t)=u_0(a-t)$. Considering the equation \eqref{www} with initial data $w(0)\ge0$ and $w(0)\not\equiv0$, we have $w(t)>0$ for $t>0$ by the strong comparison principle of the nonlinear nonlocal diffusion problem, due to $J(0)>0$ in Assumption \ref{Assump}-(iii). It follows that $u(t, a)>0$ for $a\ge t$. On the other hand, integrating \eqref{www} from $0$ to $t$, one obtains
$$
w(t)=e^{(T-I)t}w(0)+\int_0^{t}e^{(T-I)(t-s)}[\mathcal K(w(s))](s+c)ds,
$$ 
and thus
$$
u(t, a)=e^{(T-I)t}u_0(a-t)+\int_0^{t}e^{(T-I)(t-s)}[\mathcal K(u(s))](s+a-t)ds.
$$

Next we consider the case $c<0$. Integrating \eqref{www} from $-c$ to $t$, one gets 
$$
w(t)=e^{(T-I)(t+c)}w(-c)+\int_{-c}^te^{(T-I)(t-s)}[\mathcal K(u(s))](s+c)ds,
$$
and thus
$$
u(t, a)=e^{(T-I)a}u(t-a, 0)+\int_{t-a}^te^{(T-I)(t-s)}[\mathcal K(u(s))](s+a-t)ds.
$$
Thus now the solution to \eqref{original} reads as follows:
\begin{equation}\label{explicit}
	u(t, a)=
	\begin{cases}
		e^{(T-I)t}u_0(a-t)+\int_0^{t}e^{(T-I)(t-s)}[\mathcal K(u(s))](s+a-t)ds, &a\ge t,\\
		e^{(T-I)a}u(t-a, 0)+\int_{t-a}^te^{(T-I)(t-s)}[\mathcal K(u(s))](s+a-t)ds, &a<t.
	\end{cases}
\end{equation}
Next we plug the explicit formula \eqref{explicit} into $u(t, 0)$ to obtain 
\begin{eqnarray}
	u(t, 0)&=&\int_0^{t}\chi(a)\ga(a)\left[e^{(T-I)a}u(t-a, 0)+\int_{t-a}^te^{(T-I)(t-s)}[\mathcal K(u(s))](s+a-t)ds\right]da\nonumber\\
	&&+\int_t^{a^+}\chi(a)\ga(a)\left[e^{(T-I)t}u_0(a-t)+\int_0^{t}e^{(T-I)(t-s)}[\mathcal K(u(s))](s+a-t)ds\right]da,\label{u(t, 0)}
\end{eqnarray}
where $\chi(a)$ is the cutoff function satisfying $\chi(a)=1$ when $a\in(0, a^+)$ otherwise $\chi(a)=0$. Now we consider two cases.

\textbf{Case 1.} If $t<a^+$, \eqref{u(t, 0)} is written as follows:
\begin{eqnarray}
	u(t, 0)&=&\int_0^{t}\ga(a)\left[e^{(T-I)a}u(t-a, 0)+\int_{t-a}^te^{(T-I)(t-s)}[\mathcal K(u(s))](s+a-t)ds\right]da\nonumber\\
	&&+\int_t^{a^+}\ga(a)\left[e^{(T-I)t}u_0(a-t)+\int_0^{t}e^{(T-I)(t-s)}[\mathcal K(u(s))](s+a-t)ds\right]da.\label{one}
\end{eqnarray}
Since $e^{(T-I)t}u_0(a-t)>0$ for $a\ge t$ and $\ga\ge0$ for any $a\in [0, a^+]$ by Assumption \ref{Assump}-(i), the second term on the right hand of \eqref{one} must be positive. Thus we have $u(t, 0)>0$ which implies $u(t, a)>0$ for $a<t$ via \eqref{explicit}.

\textbf{Case 2.} If $t\ge a^+$, \eqref{u(t, 0)} is written as follows:
\begin{equation}\label{three}
	u(t, 0)=\int_0^{a^+}\ga(a)\left[e^{(T-I)a}u(t-a, 0)+\int_{t-a}^te^{(T-I)(t-s)}[\mathcal K(u(s))](s+a-t)ds\right]da.
\end{equation}
Let us claim that $u(t, 0, x):=[u(t, 0)](x)>0$ in $[a^+, \infty)\times\R$. By contradiction, suppose that there exists $(t_0, x_0)\in[a^+, \infty)\times\R$ such that $u(t_0, 0, x_0)=0$. Thus one obtains
\begin{equation}
	0\ge \int_0^{a^+}\ga(a)e^{-a}e^{Ta}u(t_0-a, 0, x_0)da,\nonumber
\end{equation}
where we used the fact that $e^{-a}$ and $e^{Ta}$ are commuting. By Assumption \ref{Assump}-(i) on $\ga$, we can find one point $b_0\in(0, a^+]$ such that $e^{Ta}u(t_0-b_0, 0, x_0)=0$. By definition, one has
$$
e^{Ta}u(t_0-b_0, 0, x_0)=\sum_{n=0}^\infty \frac{(a)^n}{n!}J^{*n}\ast u(t_0-b_0, 0, x_0),
$$
where $J^{*n}$ denotes the $n$-fold convolution of $K$; that is $J^{*n}=J\ast\cdots\ast J$, $n$ times. It follows that for each $n\in\N$,
$$
J^{*n}\ast u(t_0-b_0, 0, x_0)=0.
$$ 
However, by Assumption \ref{Assump}-(iii) on $J$, one has $J>0$ in $(-r, r)$ for some $r>0$, which implies that
$$
u(t_0-b_0, 0, x)=0, \text{ for all }x\in (x_0-nr, x_0+nr).
$$
Since $\R=\sum_{n=0}^\infty(x_0-nr, x_0+nr)$, we have $u(t_0-b_0, 0, \cdot)\equiv0$ in $\R$. 

Next replace $t_0$ by $t_0-b_0$ in \eqref{u(t, 0)}. If $t_0-b_0$ falls in $[0, a^+]$, by the argument as Case 1, one has $u(t_0-b_0, 0)>0$, which is a contradiction. Hence, $t_0-b_0$ must fall in $(a^+, \infty)$. Then by the same argument as Case 2, one can find $b_1\in(0, a^+]$ such that $u(t_0-b_0-b_1, 0)=0$. Now repeating the above process by induction, one can find a sequence $\{b_i\}_{i\ge0}$ such that $u(t_0-\sum_{i=0}^{\widehat M} b_i, 0)=0$ for any $\widehat M\ge0$. But we know every $b_i$ is in $(0, a^+]$, then there always exists a minimal $M_0>0$ such that $t_0-\sum_{i=0}^{M_0} b_i<a^+$. Then by Case 1, one has $u(t_0-\sum_{i=0}^{M_0} b_i, 0)>0$. 

Now consider \eqref{three} at $t=t_0-\sum_{i=0}^{M_0-1}b_i$, which is larger than or equal to $a^+$, we get a contradiction, since now the left hand side of \eqref{three} equals to zero, while the right hand side of \eqref{three} is larger than zero.

In summary, we cannot have $(t, x)\in(0, \infty)\times\R$ such that $u(t, 0, x)=0$, which implies that $u(t, 0, x)>0$ and thus $u(t, a)>0$ by \eqref{explicit}. Hence, the proof is complete.
\end{proof}

\subsection{Inner spreading}

In this section, we prove the inner spreading. To this aim, we consider the following auxiliary equation 
\begin{equation}\label{v}
	\begin{cases}
		\frac{\partial v}{\partial t}=J\ast v-v+\lambda_0v(1-Pv),\\
		v(0, x)=v_0(x),
	\end{cases}
\end{equation}
where $v_0$ denotes a Lipschitz continuous in $\R$ with compact support satisfying 
$$
v_0(x)\le\frac{1}{P} \text{ in }\R,
$$
and, recalling the definition of the function $\phi$ in \eqref{phi_2} with $s=s_0<0$, $P$ and $\la_0$ are the positive constants given by 
$$
P=\frac{\sup_{a\in(0, a^+)}\int_0^{a^+}K(a, a')\pi(a')\phi(a')da'}{\la_0}>0 \text{ and } \la_0:=-s_0>0 \text{ (see the proof of Lemma \ref{speed}) }
$$ 
Recall that $u_0=u_0(a,x)\in C^0([0,a^+]\times\R)\setminus\{0\}$ is non-negative, $u_0\leq 1$ and ${\rm supp}(u_0)$ is compact in $[0,a^+]\times \R$. Hence according to Corollary \ref{CORO}, one has there exists $\rho>0$ such that
$$
\tilde u_0(a,x):=u(a^+,a,x)\geq \rho,\;\;\forall a\in [0, a^+],\;x\in[-R, R].
$$ 
Next we fix $v_0:\R\to\R^+$ Lipschitz continuous such that
$$
{\rm supp}\,(v_0)\subset [-R,R],\;\;\sup_{x\in\R} v_0(x)\leq \frac{1}{P}\text{ and }\left(\sup_{a\in [0,a^+]}\phi(a)\right)\left(\sup_{x\in \R}v_0(x)\right)\leq \rho.
$$
so that the function $\underline{\textbf u}_0\in C([0, a^+]\times\R)$ given by
$$
\underline{\textbf u}_0(a, x):=\phi(a)v_0(x),\;\forall (a,x)\in [0,a^+]\times\R,
$$
satisfies
$$
\underline{\textbf u}_0(a, x)\le u(a^+,a, x),\;\;\forall (a,x)\in [0,a^+]\times\R.
$$
Next set $\underline{\textbf u}(t, a, x)=\phi(a)v(t, x)$, where $v=v(t, x)$ is the solution of \eqref{v} with the initial data $v_0$. Next, we verify that $\underline{\textbf u}$ is a sub-solution of \eqref{IVP-u}. Indeed, we have
\begin{eqnarray}
	\frac{\partial\underline{\textbf u}}{\partial t}+\frac{\partial\underline{\textbf u}}{\partial a}&=&J\ast \underline{\textbf u}-\underline{\textbf u}+\la_0\underline{\textbf u}(1-Pv)+s_0\underline{\textbf u}+v\int_0^{a^+}K(a, a')\pi(a')\phi(a')da'\,\nonumber\\
	&=&J\ast \underline{\textbf u}-\underline{\textbf u}+v\int_0^{a^+}K(a, a')\pi(a')\phi(a')da'-P\la_0\underline{\textbf u}v\nonumber\\
	&\le&J\ast \underline{\textbf u}-\underline{\textbf u}+v\int_0^{a^+}K(a, a')\pi(a')\phi(a')da'\, (1-\underline{\textbf u}),
\end{eqnarray}
and by \eqref{overline U} and the choice of $v_0$, one has
\begin{eqnarray}
	&&\underline{\textbf u}(t, 0, x)=\phi(0)v(t, x)=\int_0^{a^+}\ga(a)\phi(a)da\,v(t, x)=\int_0^{a^+}\ga(a)\underline{\textbf u}(t, a, x)da,\nonumber\\
	&&\underline{\textbf u}(0, a, x)=\phi(a)v_0(x)\le u_0(a, x).\nonumber
\end{eqnarray}
Now define
$$
c_0:=\inf_{\la>0}\frac{\int_{\R}J(y)e^{\lambda y}dy-1+\la_0}{\la}.
$$
Note that $c_0>0$ since $J$ is symmetric. Then let us recall a spreading result for equation \eqref{v}, from Ducrot and Jin \cite[Lemma 3.6]{ducrot2022spreading}.
\begin{lemma}
	let $v=v(t, x)$ be the solution of \eqref{v} supplemented with a continuous initial data $0\le v_0(\cdot)\le\frac{1}{P}$ and $v_0\not\equiv0$ with compact support. Let us further assume that $v$ is uniformly continuous for all $t\ge0, x\in\R$. Then one has
	$$
	\lim_{t\to\infty}\sup_{|x|\le ct}\left|v(t, x)-\frac{1}{P}\right|=0, \;\forall 0<c<c_0.
	$$ 
\end{lemma}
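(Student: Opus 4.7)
The plan is to treat this as a standard spreading result for the nonlocal Fisher--KPP equation. The rescaling $w:=Pv$ converts \eqref{v} into
$$
\partial_t w = J\ast w - w + \lambda_0 w(1-w),\qquad 0\le w_0\le 1,\ w_0\not\equiv 0,
$$
so the task reduces to showing $w(t,x)\to 1$ uniformly on $\{|x|\le ct\}$ for every $c<c_0$, and by the symmetry of $J$ it suffices to treat the direction $x\ge 0$.

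First I would establish the hair-trigger effect for $w$, namely $w(t,\cdot)\to 1$ locally uniformly as $t\to\infty$. Since $J(0)>0$, the positivity argument from Corollary \ref{CORO} gives $w(t,x)>0$ for every $t>0$, and together with the uniform continuity hypothesis this yields constants $\tau_0>0$, $R_0>0$, $\delta_0>0$ with $w(\tau_0,x)\ge\delta_0$ for $|x|\le R_0$. For any $L>0$, consider the truncated positive compact operator
$$
[\mathcal L_L\psi](x):=\int_{-L}^{L}J(x-y)\psi(y)\,dy-\psi(x)+\lambda_0\psi(x),\qquad x\in(-L,L),
$$
whose principal eigenpair $(\mu_L,\psi_L)$ is provided by Krein--Rutman. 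The monotone dependence on $L$ yields $\mu_L\uparrow \lambda_0>0$, so $\mu_L>0$ for $L$ large enough. For such $L$, the function $\varepsilon\psi_L$ (with $\varepsilon>0$ chosen so that $\lambda_0 \varepsilon^2\psi_L^2\le \mu_L\varepsilon\psi_L$) is a stationary sub-solution of the KPP equation supported in $(-L,L)$; comparison gives $\liminf_{t\to\infty}w(t,x)\ge \bar w_L(x)$, where $\bar w_L$ is the unique positive steady state of the truncated equation, and sending $L\to\infty$ together with $\bar w_L\to 1$ yields the hair-trigger.

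Next, for any fixed $c\in (0,c_0)$ I would construct a sub-solution travelling exactly at speed $c$. By the definition of $c_0$, choose $\lambda>0$ with $\int_\R J(y)e^{\lambda y}\,dy-1+\lambda_0>c\lambda$, and then choose $L$ so large that the spectral gap $\mu_L$ of $\mathcal L_L$ remains strictly larger than $c\lambda$. Using the Krein--Rutman eigenfunction $\psi_L$, I set
$$
\underline w(t,x):=\varepsilon\,\psi_L(x-ct)\,\mathbf{1}_{(-L,L)}(x-ct),
$$
and verify, by a direct computation controlling the mass of $J$ leaking through the boundary $\pm L$ and absorbing the KPP nonlinearity, that $\underline w$ is a generalised sub-solution for $\varepsilon$ small enough. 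Once the hair-trigger step provides a time $T^*$ after which $w(T^*,\cdot)\ge \underline w(0,\cdot-x_0)$ for any prescribed $x_0\in\R$, the comparison principle yields $w(T^*+t,x)\ge \underline w(t,x-x_0)$ for all $t\ge 0$; propagating this bound along the family of moving frames that cover $\{|x|\le ct\}$ and then applying the stationary sub-solution of Step~1 to each translate upgrades the lower bound from $\varepsilon\psi_L$ to $1$.

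The principal obstacle is the moving sub-solution: nonlocal diffusion has no smoothing effect, so the classical constructions via smooth compactly supported bumps (which rely on Taylor expansions of $J\ast w$ at small scales) are unavailable. The truncated Krein--Rutman spectral problem circumvents this issue, but one must carefully quantify the $J$-mass crossing the Dirichlet boundary at $\pm L$ and guarantee that the effective spectral gap $\mu_L-c\lambda$ remains positive once those losses are accounted for; tuning $L$ and $\varepsilon$ simultaneously to achieve this is the delicate step.
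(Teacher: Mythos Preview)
The paper does not actually prove this lemma: it is quoted verbatim from Ducrot and Jin \cite[Lemma 3.6]{ducrot2022spreading} with no argument supplied. So there is nothing in the paper to compare your proof against; your sketch is the only proof on the table.

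Your overall architecture (rescale by $P$, establish the hair-trigger effect via a truncated principal eigenvalue, then push a compactly supported sub-solution at speed $c$) is the right one and matches how such results are typically proved for nonlocal KPP equations. The hair-trigger step is fine.

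The gap is in the moving sub-solution. Your operator $\mathcal L_L$ does \emph{not} contain the drift $c\partial_\xi$, so its eigenpair $(\mu_L,\psi_L)$ says nothing about the equation in the moving frame. For $\underline w(t,x)=\varepsilon\psi_L(x-ct)$ to be a sub-solution you would need
\[
-c\,\psi_L'(\xi)\le \int_{-L}^{L}J(\xi-y)\psi_L(y)\,dy-\psi_L(\xi)+\lambda_0\psi_L(\xi)(1-\varepsilon\psi_L(\xi)),
\]
and the left-hand side has no reason to be controlled by $(\mu_L-c\lambda)\psi_L$: the parameter $\lambda$ you chose from the exponential characterisation of $c_0$ never enters the ansatz, and $\psi_L$ need not satisfy $|\psi_L'|\le\lambda\psi_L$ (indeed $\psi_L$ need not even be differentiable). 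The inequality $\mu_L>c\lambda$ is therefore disconnected from what has to be checked. The standard fix is to put the drift into the spectral problem from the start, i.e.\ study the principal eigenvalue of
\[
\psi\mapsto c\psi'+\int_{-L}^{L}J(\cdot-y)\psi(y)\,dy-\psi+\lambda_0\psi
\]
on $(-L,L)$ (or equivalently work with a tilted kernel $e^{\lambda y}J(y)$ so that the drift is absorbed into the convolution), and show that this eigenvalue stays positive for $c<c_0$ and $L$ large. Once that is done, the corresponding eigenfunction gives a genuine travelling sub-solution and the rest of your argument goes through.
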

Next, let us show $c_0=c^*$. Define the function $H(\la), \la>0$ as follows,
$$
H(\la):=\frac{\int_{\R}J(y)e^{\lambda y}dy-1+\la_0}{\la}.
$$
Differentiating $H$ with respect to $\la$ and setting $\la^*=\la(c^*)$, one obtains by \eqref{c*}
$$
H'(\la)=\frac{\int_{\R}J(y)e^{\lambda y}(\la y-1)dy-\int_{\R}J(y)e^{\lambda^* y}(\la^*y-1)dy}{\la^2}.
$$		
Next, when $H'(\la)=0$, one obtains that $\la$ satisfies
$$
\int_{\R}J(y)e^{\lambda y}(\la y-1)dy=\int_{\R}J(y)e^{\lambda^* y}(\la^*y-1)dy,
$$
which implies $\la=\la^*$ due to the monotonicity of $\int_{\R}J(y)e^{\lambda y}(\la y-1)dy$ with respect to $\la$ and thus $c_0=c^*$. Hence now for any $0\leq c<c^*$ we have 
\begin{equation}\label{u}
\lim_{t\to\infty}\inf_{|x|\le ct, 0<a<a^+}u(t+a^+, a, x)\ge\lim_{t\to\infty}\inf_{|x|\le ct, 0<a<a^+}\phi(a)v(t, x)\ge\frac{\min_{a\in[0, a^+]}\phi(a)}{P}=:\rho_0>0.
\end{equation}
Moreover, one may assume that $\rho_0<1$. Thus \eqref{u} implies that for any $c\in [0,c^*)$, there exists $T>0$ such that 
$$
u(t, a, x)\ge\frac{\rho_0}{2} \text{ for }t\ge T,\, |x|\le ct \,\text{ and }a\in(0, a^+).
$$
Now for any $\rho\in(0, 1)$, applying Lemma \ref{HTE} to \eqref{IVP-u} yields that there exists $T_{\rho_0}^\rho>0$ such that
$$
u(t+T_{\rho_0}^\rho, a, x)\ge\rho, \text{ for }t\ge T,\, |x|\le ct\, \text{ and }a\in(0, a^+),
$$
which implies that
$$
\inf_{|x|\le ct-cT_{\rho_0}^\rho, 0<a<a^+}u(t, a, x)\ge\rho, \text{ for }t\ge T+T_{\rho_0}^\rho.
$$
For any $\ep\in(0, c)$, there exists a constant $T'\ge T_{\rho_0}^\rho$ such that $\ep T'\ge cT_{\rho_0}^\rho$. Then we have that $ct-cT_{\rho_0}^\rho\ge (c-\ep)t$ and 
$$
\inf_{|x|\le (c-\ep)t, 0<a<a^+}u(t, a, x)\ge\rho, \text{ for }t\ge T'.
$$
%which says that for any continuous function $u_0$ supported in $|x|\le R$ with $R\ge0$, the solution to \eqref{IVP-u} with initial data $u_0$ satisfies
%\begin{equation}\label{HT}
%\lim_{t\to\infty}\inf_{|x|\le R}u(t, a, x)=1, \text{ for any }R\ge0, \text{ uniformly in }a\in(0, a^+).
%\end{equation}
Now since $\rho\in(0, 1)$ is arbitrary close to $1$, we obtain 
$$
\lim_{t\to\infty}\inf_{|x|\le (c-\ep)t, 0<a<a^+}u(t, a, x)=1.
$$
Due to the arbitrariness of $\ep$, hence we have the following result.
\begin{theorem}
	let $u=u(t, a, x)$ be the solution of \eqref{IVP-u} supplemented with a continuous initial data $0\le u_0\le1$ and $u_0\not\equiv0$ with $u_0$ being compactly supported in $[0, a^+]\times\R$, then one has
	$$
	\lim_{t\to\infty}\sup_{|x|\le ct, 0<a<a^+}\left|u(t, a, x)-1\right|=0, \;\text{ for all } c\in(0, c^*).
	$$ 
\end{theorem}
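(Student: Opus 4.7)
The plan is to construct a sub-solution of \eqref{IVP-u} of separable form $\underline{\textbf u}(t,a,x)=\phi(a)\,v(t,x)$, where $\phi$ is the positive solution of \eqref{phi_2} at $s=s_0$ and $v$ solves a scalar Fisher--KPP equation with nonlocal diffusion. Using the identity $s_0=-\lambda_0$ with $\lambda_0>0$ together with the eigen-relation satisfied by $\phi$, a direct substitution shows that it suffices for $v$ to satisfy
$$
\partial_t v = J\ast v - v + \lambda_0 v(1-Pv),
$$
for a suitably large constant $P>0$. The age boundary condition at $a=0$ is automatically satisfied, since $\phi(0)=\int_0^{a^+}\gamma(a)\phi(a)\,da$, and the logistic structure of the nonlinearity is obtained by bounding the age-averaged infection integral appearing in \eqref{IVP-u} from below by a linear combination of $v$ and $\phi(a)v$.

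The next step is to match the initial datum of $\underline{\textbf u}$ with the solution $u$. Here the point is that the natural candidate $\phi(a)v_0(x)$ need not lie below $u_0$, because $u_0$ may vanish on parts of $[0,a^+]\times[-R,R]$. To sidestep this, I would apply Corollary \ref{CORO}: after time $a^+$, the solution becomes uniformly positive on every compact set, so $u(a^+,a,x)\ge \rho$ on $[0,a^+]\times[-R,R]$ for some $\rho>0$. Choosing a Lipschitz $v_0$ with support in $[-R,R]$, $\sup v_0\le 1/P$, and small enough that $\phi(a)v_0(x)\le u(a^+,a,x)$, the comparison principle of Lemma \ref{wcp} yields $u(t+a^+,a,x)\ge \phi(a)\,v(t,x)$ for all $t\ge 0$.

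The crux of the argument is identifying the spreading speed of $v$ with $c^*$. I would invoke the KPP spreading result of Ducrot--Jin \cite{ducrot2022spreading} for the nonlocal KPP equation, which yields
$$
c_0=\inf_{\lambda>0}\frac{\int_\R J(y)e^{\lambda y}\,dy-1+\lambda_0}{\lambda}.
$$
Writing the first-order optimality condition for this infimum, one finds that the minimizer $\lambda$ must satisfy the tangency identity $\int_\R J(y)e^{\lambda y}(\lambda y-1)\,dy=\int_\R J(y)e^{\lambda^\ast y}(\lambda^\ast y-1)\,dy$ with $\lambda^\ast=\lambda(c^\ast)$. Strict monotonicity of $\lambda\mapsto \int_\R J(y)e^{\lambda y}(\lambda y-1)\,dy$ then forces $\lambda=\lambda^\ast$, and the defining equation \eqref{c*} for $c^\ast$ gives $c_0=c^\ast$. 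I expect this computation to be the delicate step.

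Finally, to upgrade the positive lower bound $\rho_0>0$ thus obtained to the value $1$ in the conclusion, I would use the hair-trigger effect of Lemma \ref{HTE}: for any $\rho\in(\rho_0,1)$ there is a waiting time $T^{\rho}_{\rho_0}$, independent of the base point, after which $u\ge \rho$ on every unit interval where $u\ge \rho_0/2$ is known to hold. A time-shift by $T^{\rho}_{\rho_0}$ costs a spatial interval of length $cT^{\rho}_{\rho_0}$, which is absorbed by replacing the speed $c$ by $c-\varepsilon$ with $\varepsilon>0$ arbitrarily small. Letting $\rho\uparrow 1$, $\varepsilon\downarrow 0$, and using that $c\in(0,c^\ast)$ is arbitrary concludes the proof.
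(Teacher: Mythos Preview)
Your proposal is correct and follows essentially the same route as the paper: the separable sub-solution $\phi(a)v(t,x)$ with $v$ solving the nonlocal KPP equation, the use of Corollary~\ref{CORO} to shift the initial time by $a^+$ before comparing, the identification $c_0=c^\ast$ via the first-order optimality condition for $H(\lambda)$ combined with \eqref{c*}, and the final upgrade from $\rho_0$ to $1$ via Lemma~\ref{HTE} together with the $c\mapsto c-\varepsilon$ trick are exactly the steps the paper carries out. The only cosmetic difference is that the paper gives the explicit value $P=\lambda_0^{-1}\sup_a\int_0^{a^+}K(a,a')\pi(a')\phi(a')\,da'$, which is precisely what your ``suitably large $P$'' must be for the sub-solution inequality to close.
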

This proves Theorem \ref{SS}-(ii).

\bibliography{hulk}
\bibliographystyle{plain}
\addcontentsline{toc}{section}{\refname}

\end{document}